\title{On $pp$-elimination and stability in a continuous setting}
\date{\today}
\author{Nicolas Chavarria Gomez\\ University of Notre Dame \and Anand Pillay\thanks{Supported by NSF grants DMS-1665035, DMS-1760212, DMS-2054271, and a SDV at the Fields Institute, Toronto.}\\University of Notre Dame}
\newtheorem{Theorem}{Theorem}[section]
\newtheorem{Proposition}[Theorem]{Proposition}
\newtheorem{Definition}[Theorem]{Definition}
\newtheorem{Remark}[Theorem]{Remark}
\newtheorem{Lemma}[Theorem]{Lemma}
\newtheorem{Corollary}[Theorem]{Corollary}
\newtheorem{Fact}[Theorem]{Fact}
\newtheorem{Example}[Theorem]{Example}
\newtheorem{Problem}[Theorem]{Problem}
\newcommand{\Z}{\mathbb Z}
\begin{document}
\maketitle

\begin{abstract}    We generalize ``$pp$-elimination" for modules, or more generally, abelian structures, to a continuous logic setting where the abelian structure is equipped with a homomorphism to a compact (Hausdorff) 
group.  We conclude that the continuous logic theory of such a structure is stable. 

\end{abstract}

\section{Introduction}
The model theory of modules, or more generally abelian structures, comprises an important chapter in model theory and mathematical logic, feeding into geometric stability theory  (see Chapter 4 of \cite{Pillay-GST}) as well as representation theory \cite{Prest}.

Among the key facts in the context of left $R$-modules, for $R$ a unitary ring, and the corresponding language $L_{R}$, is $pp$ elimination: for a given module $M$, any formula $\phi(\bar x)$ is equivalent in $M$ to a (finite) Boolean combination of $pp$ (positive primitive) formulas.


In unpublished work, Fisher introduced the more general notion of an {\em abelian structure} as a many sorted structure, where each sort has an abelian group structure, and the distinguished relations are subgroups of Cartesian powers of sorts, and where similar theorems hold. In particular one obtains stability of the first order theories of abelian structures,  in a very strong form which has come to be known as $1$-basedness.  Moreover any one-based group is essentially an abelian structure. 

It is natural to ask what, if anything, is the analogue of an abelian structure in a continuous logic framework where formulas are real-valued rather than Boolean valued; in particular, what is the continuous logic analogue of  a one-based group.  Moreover, are there analogues of $pp$-elimination?
This is what we try to answer in the current paper.   

In addition to the motivation described above, 
some other inspiration came from Hrushovski's recent work  on the theory of finite fields with an additive character, as a theory in continuous logic \cite{Hrushovski-additive}.  Hrushovski proved (among many other things) simplicity of the theory, so it was natural to imagine that with only the additive structure on the field we would have stability. 

Our set-up will, for simplicity of presentation,  be a one-sorted abelian structure $A$, that is an abelian group $(A,+, -, 0)$ equipped with a collection $\mathcal S$ of subgroups of various Cartesian powers of $A$ (including $=$, the diagonal in $A^{2}$), {\em together with} a homomorphism $f$ from $A$ to a compact (Hausdorff) group $\mathbb T$ which we may and will assume to be also commutative, and which we will write additively.  Our results will generalize without  much trouble to that of a many sorted abelian structure equipped with a compatible collection of homomorphisms from the sorts to compact Hausdorff groups. 
We write this ``structure" as $M = (A,+,-,0,P,f,{\mathbb T})_{P\in {\mathcal S}}$.  The precise formalism will be described in detail in the next section, where we give a continuous logic appropriate for the study of a first order structure equipped with a map to a compact space, and will be closer to \cite{H-I} than to  \cite{BY-B-H-U}, although ultimately equivalent. 

 For now we will just give an informal description of the results and notions.

The first order logic (FOL) part of the structure is  obtained by forgetting about $\mathbb T$ and $f$, and is just the abelian structure  $M^{-} = (A,+,-,0,P)_{P\in {\mathcal S}}$, in the appropriate language $L^{-}$.  The FOL-atomic formulas  are of the form $P({\bar t}({\bar x}))$, where $P\in {\mathcal S}$ and ${\bar t}$ is a sequence of terms.  The {\em positive primitive} ($pp$) formulas are by definition formulas of the form $\exists{\bar y}\phi({\bar x}, {\bar y})$ where 
$\phi({\bar x}, {\bar y})$ is a (finite) conjunction of FOL-atomic formulas (and we witness all free variables which occur).  So we repeat that in so far as this FOL-structure is concerned, the classical results say that every formula is equivalent, in $M^{-}$ or modulo the first order theory of $M^{-}$, to a Boolean combination of positive primitive formulas  (see \cite{Gute} for example). 
By a $pp^{*}$-formula for $M$ we will mean something of the form $\exists {\bar y}\phi({\bar x}, {\bar y})$ where now $\phi({\bar x}, {\bar y})$ is a finite conjunction of FOL-atomic formulas and ``expressions" of the form $f(x_{i}) = c$, $f(y_{j}) = c$ for $c\in {\mathbb T}$ and $x_{i}, y_{j}$ variables (ranging over $A$) from ${\bar x}$, ${\bar y}$ respectively. 

Note that a $pp$ formula $\phi({\bar x})$ defines a subgroup of the relevant Cartesian power $A^{n}$ of $A$, which we call a $pp$-subgroup.   
If $\phi({\bar x})$ is a $pp^{*}$-formula and all the parameters $c$ from $\mathbb T$ which appear in the formula are $0$, then again $\phi({\bar x})$ defines a subgroup of the relevant $A^{n}$, which we call a $pp^{*}$-subgroup of $A^{n}$. In general a $pp^{*}$-formula will define a {\em coset} of a $pp^{*}$-subgroup.   

We will be defining syntax and semantics, in particular the language $L$ of the structure $M$, the notions of continuous logic (CL) $L$-formulas, saturation, etc. All $FOL$ formulas in the language $L^{-}$ of $M^{-}$ will be continuous logic $L$- formulas, and $pp^{*}$-formulas will  also be  such formulas.   

We will say that the ``continuous theory" $Th_{CL}(M)$ is {\em stable} if whenever $({\bar a}_{i}, {\bar b}_{i})$ is an indiscernible sequence in a saturated model and $i<j$ then $({\bar a}_{i}, {\bar b}_{j})$ has the same type as $({\bar a}_{j}, {\bar b}_{i})$. 

Our results, expressed somewhat informally, are: 
\begin{Theorem}  (informal version.) Assume $M$ to be saturated (in the appropriate sense).  Then two finite tuples with the same $pp^{*}$-type have the same type. We conclude that $Th_{CL}(M)$ is stable, as well as every formula being equivalent (in a suitable approximate sense) to a suitable combination of $pp^{*}$-formulas and negated $pp$-formulas. 

\end{Theorem}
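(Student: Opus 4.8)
The plan is to prove the three assertions in the order stated, deriving stability and the approximate $pp^{*}$-elimination as consequences of the first, ``type determination'' statement. The whole argument rests on the observation that the continuous $L$-formulas are generated, under continuous connectives and the quantifiers $\inf$ and $\sup$, by the $FOL$-atomic formulas $P(\bar t(\bar x))$ together with the basic ``geometric'' formulas $d(f(t(\bar x)),c)$, where $d$ is a fixed metric inducing the topology of $\mathbb{T}$, $t$ ranges over terms, and $c$ over $\mathbb{T}$; every formula is a uniform limit of such combinations. Since a continuous type is exactly the assignment of values to all formulas, it suffices to show by induction on the construction of a formula $\psi$ that, for $\bar a\equiv_{pp^{*}}\bar b$ (same $pp^{*}$-type), the value $\psi^{M}(\bar a)$ equals $\psi^{M}(\bar b)$.

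The base case records the content of the hypothesis: the value of each $FOL$-atomic formula is fixed by the ordinary $pp$-type (classical $pp$-elimination for the abelian structure $M^{-}$, as in \cite{Gute}), while the value of each $d(f(t(\bar x)),c)$ at $\bar a$ is the number $d(\sum_i n_i f(a_i),c)$, hence is determined by the tuple $(f(a_1),\dots,f(a_n))\in\mathbb{T}^{n}$, which is part of the $pp^{*}$-type. Continuous connectives are handled trivially, as continuous functions of already-matched values. The whole weight of the theorem therefore falls on the quantifier step: showing that if $\bar a\equiv_{pp^{*}}\bar b$ then $\inf_{z}\psi(\bar a,z)=\inf_{z}\psi(\bar b,z)$ (and symmetrically for $\sup$). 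By the inductive hypothesis applied to the matrix $\psi(\bar x,z)$, the value $\psi(\bar a,c)$ depends only on $pp^{*}\text{-}\mathrm{tp}(\bar a c)$, so it is enough to prove the following \emph{one-point extension} property: the set of $pp^{*}$-types $\{\,pp^{*}\text{-}\mathrm{tp}(\bar a c):c\in A\,\}$ coincides with $\{\,pp^{*}\text{-}\mathrm{tp}(\bar b c'):c'\in A\,\}$. Saturation of $M$ then guarantees that the two infima, each computed over the same set of realized types, agree.

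To prove the one-point extension property I would combine the classical $pp$-machinery with compactness of $\mathbb{T}$. Given $c$, the $pp$-type of $c$ over $\bar a$ transfers, using $\bar a\equiv_{pp}\bar b$ and the additivity (closure under $+$) of $pp$-types together with the Neumann/index argument guaranteeing finite consistency, to a $pp$-type over $\bar b$; the new ingredient is to realize it by a $c'$ whose $f$-value is prescribed. Concretely, the constraints on $c'$ say that $c'$ lies in a coset $c_{0}+H$ for a $pp$-subgroup $H$ (together with finitely many negated-$pp$ exclusions) and that $f(c')$, as well as the $f$-values of the relevant terms in $\bar b,c'$, land in prescribed elements of $\mathbb{T}$. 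Consistency reduces to the statement that the target $f$-value lies in $f(c_{0}+H)$; since $\mathbb{T}$ is compact, $\overline{f(H)}$ is a closed subgroup and $\overline{f(c_{0}+H)}$ the corresponding coset, and the equality $\bar a\equiv_{pp^{*}}\bar b$ is precisely what encodes these closed subgroups and cosets as the same on both sides. Saturation then realizes the (consistent) prescribed $pp^{*}$-type, producing $c'$. I expect this matching of the \emph{closures} $\overline{f(H)}$ across tuples with the same $pp^{*}$-type to be the main obstacle, as it is the one point where the discrete $pp$-combinatorics and the topological data of $\mathbb{T}$ genuinely interact, and it is where compactness of $\mathbb{T}$ is essential.

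Granting type determination, the two consequences are soft. For stability, let $(\bar a_i,\bar b_i)$ be an indiscernible sequence in the saturated model and $i<j$; by the theorem it is enough to see $pp^{*}\text{-}\mathrm{tp}(\bar a_i\bar b_j)=pp^{*}\text{-}\mathrm{tp}(\bar a_j\bar b_i)$. The $pp$-part is symmetric because the first-order theory of an abelian structure is stable (indeed one-based), so order-indiscernibles are totally indiscernible; the $f$-part is symmetric because indiscernibility forces $f(\bar a_i)$ and $f(\bar b_i)$ to be constant along the sequence, so the $f$-data of $\bar a_i\bar b_j$ and of $\bar a_j\bar b_i$ coincide. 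Hence the two $pp^{*}$-types agree, and the full types agree. For the approximate $pp^{*}$-elimination, type determination says exactly that the $pp^{*}$-formulas together with the negated $pp$-formulas separate complete types; since the relevant type spaces are compact Hausdorff and the map to $pp^{*}$-types is then a continuous bijection, hence a homeomorphism, a Stone--Weierstrass argument in continuous logic shows that every formula is a uniform limit of continuous combinations of $pp^{*}$-formulas and negated $pp$-formulas, which is the asserted elimination.
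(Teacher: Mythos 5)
Your overall architecture matches the paper's: everything is made to rest on a one-point extension lemma for $pp^{*}$-types (the paper's Lemma 3.5), with stability and the approximate elimination then extracted softly via compactness of type spaces (the paper's Corollary 3.8 and Remark 3.7, the latter through Lemma 2.13). But there is a genuine gap at the heart of your one-point extension argument: the claim that ``consistency reduces to the statement that the target $f$-value lies in $f(c_{0}+H)$'' is false in the presence of the negated-$pp$ exclusions, and it is precisely there that the discrete combinatorics and the topological data of $\mathbb{T}$ interact. Once the $f$-values of $y$ and of all existential witnesses are pinned, the solution set of the positive $pp^{*}$-constraints over $\bar b$ is a coset $X$ of a subgroup $H$ of $\ker f$, and what must be shown is that $X$ is not covered by the finitely many excluded cosets $\psi_{i}(\bar b,y)$. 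Your two-step plan --- first transfer the $pp$-type-with-exclusions classically, then prescribe the $f$-value --- fails in both directions: an element realizing the exclusions need not admit the prescribed $f$-value, and every element of $(c_{0}+H)\cap f^{-1}(s)$ may violate the exclusions even though the corresponding set over $\bar a$ escapes them. The paper runs B.~H.~Neumann's lemma at the \emph{pinned} level (discarding the $\psi_{i}$ with $H\cap H_{i}$ of infinite index in $H$), then uses an inclusion--exclusion count over cosets of $H\cap\bigcap_{i}H_{i}$ (Fact 3.6) to reduce non-covering to a transfer subclaim: for each $\Delta$, the intersection $X\cap\bigcap_{i\in\Delta}X_{i}$ is nonempty iff the corresponding intersection over $\bar a$ is. That subclaim itself needs both negated $pp$-formulas (Case (i)) and a compact-group coset argument in $\mathbb{T}^{1+k}$ (Case (ii)). None of this counting and transfer machinery appears in your sketch, and without it the consistency claim does not go through.

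Two further points. First, the obstacle you flag as central --- matching the closures $\overline{f(H)}$ across the two tuples --- is in fact the easy part: $H$ is defined by a parameter-free formula, so $f(H)$ is literally the same subgroup of the relevant $\mathbb{T}^{m}$ on both sides, and in the saturated model it is already closed (by the argument of Remark 2.10(iii)), no closures needed; equality of the two \emph{cosets} follows from exhibiting a single common point, which is expressed by a shared $pp^{*}$-formula --- this is exactly the paper's Lemma 3.4, which you also need explicitly for another reason: realizing the positive $pp^{*}$-type of $(\bar a,c)$ together with the exclusions only gives containments of positive and negative data, and Lemma 3.4 is what upgrades this to equality of $pp^{*}$-types. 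Second, your stability derivation is too quick on the $f$-side: the $pp^{*}$-type of $(\bar a_{i},\bar b_{j})$ is not determined by its $pp$-type together with the coordinate values $f(\bar a_{i}),f(\bar b_{j})$, because a $pp^{*}$-formula also pins the $f$-values of the existential witnesses, i.e., records which coset of $\bar f(S)$ the witness set maps to, and that datum need not be constant for free. The paper's proof of Corollary 3.8 handles this with a dedicated indiscernibility argument (inserting $k$ with $i<k<j$ and some $j'>j$ to show $\phi^{*}(\bar x,\bar b_{i})$ and $\phi^{*}(\bar x,\bar b_{j})$ define the same coset), rather than citing total indiscernibility of the abelian reduct.
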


The precise statements, in  light of the ``theory" developed in the next section, appear in  Theorem 3.3,  Corollary 3.8, and Remark 3.7.

\vspace{5mm}
\noindent
 Thanks to Ward Henson, Ehud Hrushovski and Tommy Kucera for communications about continuous logic and the material in this paper. 
Kucera pointed out the possible connection to his own work on the model theory of topological modules in the Flum-Ziegler logic $L_{t}$ \cite{Kucera}.

\section{Continuous logic}
Here we will introduce a version of continuous logic suitable for our purposes and for possible future work on related problems.  The expression ``continuous logic" has come to be identified with the  formalism introduced in \cite{BY-B-H-U} and \cite{BY-U}.  Earlier versions include the theory developed by Chang and Keisler \cite{C-K}, and  Henson's ``approximate logic" or ``positive bounded logic"  \cite{H-I}.  Ben-Yaacov's compact abstract theories (CATS) \cite{BY-CATS}, ``positive logic", or ``positive Robinson theories", are somewhat more general (as explained in \cite{BY-U}).  Robinson theories \cite{Hrushovski-Robinson} as well as the extension in \cite{Pillay-ec} are special cases of CATS,  all of which also appear in some form (not always explicitly) in \cite{Shelah-Lazy-Guide}. 




In the Ben-Yaacov, Berenstein, Henson, and Usvyatsov version \cite{BY-B-H-U}, \cite{BY-U},  relations and formulas have values in the real unit interval $[0,1]$, and structures are also equipped with a metric $d$ (with values in $[0,1]$) which  replaces the equality relation and coincides with it when the metric is discrete.  
They also talk about ``conditions", which are typically of the form that a given formula has a given value, and these conditions {\em are} true or false.  Whereas in \cite{H-I} the formulas themselves are true or false. 

Roughly speaking, in continuous logic,  compact spaces $C$ such as $[0,1]$ have a privileged role, as repositories of truth values (generalizing True/False) or ``distances", or as sorts which are fixed as models or structures vary.

We are here interested in  ``continuous logic"  structures $M$ of the form $(M^{-},f,C)$ where $M^{-}$ is a classical first order structure (with equality)  and $f$ is a map from the universe of $M^{-}$ to a compact Hausdorff space $C$.  This extends  to the context where $M^{-}$ is many sorted and we have a collection of maps from Cartesian powers of various sorts of $M^{-}$ to various compact spaces.    


\vspace{2mm}
\noindent
Because our structure is mixed  (a classical first order part and a continuous part), and also because our immediate aim is to generalize the usual $pp$-elimination theorem,  it will be conceptually and technically convenient to consider $(M^{-},f,C)$ as a  two-sorted structure, where  formulas are true or false.   $C$ could be equipped with {\em all} subsets of its Cartesian powers although we will need less.  $C$ will not be allowed to vary, our relevant ``continuous logic" formulas will be a subset of all the first order formulas, and the semantics will be induced by the usual semantics. 

The set-up will be close to that of Henson and Iovino \cite{H-I} in various ways, although our underlying structure $M^{-}$ is not a normed vector space.  

The proofs of the basic model theory results in this section are completely routine, but we may give some hints, sketches,  or references to \cite{H-I} for similar things.  The acronym FOL stands for (classical) first order logic, and CL will stand for continuous (first order) logic, which is denoted by CFOL in some other places. 

At the end of this section we will make a comparison or translation between our formalism and that of \cite{BY-B-H-U}, aimed at those who are familiar with the latter. 

We will start with the ``structure" $M$, produce a language and then define what other CL structures for the logic are, and then define CL-formulas and the semantics. 

So let us fix  the structure $M = (M^{-}, f, C)$.  

Let $L^{-}$ be the language or vocabulary of the one sorted structure $M^{-}$ (including equality) as a FOL structure. 

$L$ will be a $2$-sorted language, a sort $P$ for $M^{-}$ and a sort $Q$ for $C$.  $L^{-}$ will be a sublanguage of $L$, with the proviso that all the symbols are restricted to the $P$-sort. 
We will have a function symbol (without loss of generality $f$) for the function $f:M^{-} \to C$.  We also have predicate symbols $P_{D}$ for every closed subset $D$ of $C^{n}$ (as $n$ varies), in particular a predicate for equality on $C$.   Note that in particular we have essentially constant symbols for each element $c$ of $C$ (represented as the singleton $\{c\}$).  

In an arbitrary  FOL $L$-structure, the $P$ sort will be an $L^{-}$-structure, and the $Q$ sort will be equipped with the interpretations of the symbols $P_{D}\subset Q^{n}$. But:

\begin{Definition} By a CL $L$-structure based on the compact space $C$ we mean an FOL $L$-structure such that the interpretation of the $Q$-sort is precisely  $C$ with the tautological interpretations of  predicate symbols $P_{D}$.  In particular $M$ itself is a CL $L$-structure. 
\end{Definition} 

In our context we  may suppress both $L$ and $C$ and just talk about CL-structures.  

Let us note in passing that the cardinality of $L$ will be greater than the cardinality of $C$.  We could economize a bit by working with a smaller collection of closed subsets of $C^{n}$ (satisfying suitable denseness properties), but in the interests of a simple presentation we will ignore this for now. 

Note that all the $L$-terms have domain in some $P^{n}$, and those that are $Q$-valued consist of the composition of an $L^{-}$-term with $f$.
We now define the CL $L$-formulas, a subset of the collection of FOL $L$-formulas.  The only variables appearing in the CL formulas, whether free or quantified,  will be of sort $P$. 

\begin{Definition} 
(i)  Any FOL $L^{-}$-formula is a CL formula.
\newline
(ii) Suppose that $t_{1},..,t_{n}$ are $Q$-valued terms  and $D$ is a closed subset of  $C^{n}$, then $P_{D}(t_{1},..,t_{n})$, which we may also write as
$(t_{1},..,t_{n})\in D$,  is a CL-formula.  If $D$ is a singleton $\{(c_{1},..,c_{n})\}\in C^{n}$, we may write this formula as ${\bar t} = {\bar c}$. 
\newline
(iii) If $\phi$, $\psi$ are CL-formulas, so are  $(\phi \wedge \psi)$ and  $(\phi \vee \psi)$.
\newline
(iv) If $\phi$ is a CL-formula and $x$ is a variable of sort $P$ then $\exists x \phi$ and $\forall x \phi$ are CL-formulas.
\newline
(v) Nothing else is a CL-formula. 
\newline
(vi)  a CL sentence is a CL formula without free variables. 
\end{Definition}

So there are no negations in CL-formulas other than inside $L^{-}$-subformulas. 

\begin{Remark}
As a CL-structure is also a (FOL) $L$-structure and a CL-formula is an FOL $L$-formula, the usual satisfaction relation $N\models \phi({\bar a})$ where $N$ is an $L$-structure, $\phi({\bar x})$ an $L$-formula, and ${\bar a}$ a suitable tuple from $N$, specializes to the case where $N$ is a CL-structure and $\phi({\bar x})$ a CL-formula, and ${\bar a}$ a tuple (from the $P$-sort of $N$).  So there is no need for a separate definition of truth for CL-formulas and CL-structures. 
\end{Remark}

In usual model theory, especially stability, we are often concerned just with behaviour and definability in a saturated model.  For the CL-version, the above definitions suffice for working in ``saturated models", but to work with ``arbitrary models" (or even to define what we mean by saturation) we need the notion of approximate truth and satisfaction, as in \cite{H-I}.  This is really about the quantifiers. In continuous logic in the sense of \cite{BY-B-H-U} where formulas have values in $[0,1]$, approximate truth is directly built into  the set-up via having all continuous functions from $[0,1]^{n}$ to $[0,1]$ as connectives and  {\em inf} and {\em sup} as the quantifiers.  This will be discussed later.

The following is an adaptation of some of the definitions in Chapters 5  and 6 of \cite{H-I} to our context.  It is a bit long-winded, but straightforward.  It is very similar to the situation with hyperimaginaries in FOL where we only really see them in saturated models, and one can ask what exactly is discerned about them in arbitrary models. 

\begin{Definition} (i) Let $D$ be a closed subset of $C^{n}$.  By an approximation to $D$, we mean a closed neighbourhood $D'$ of $D$, namely a closed subset of $C^{n}$ such that for some open subset $U$ of $C^{n}$, we have that $D\subseteq U \subseteq D'$. 
\newline
(ii) Let $\phi$ be a CL-formula. By an approximation to $\phi$ we mean a CL-formula obtained from $\phi$ by replacing every appearance of any closed set $D$ in $\phi$ by an approximation to $D$. The formal inductive definition is similar to the definition at the bottom of p. 24 and top of p. 25 in \cite{H-I}.
\newline
(iii) For $\Gamma$ a set of CL-formulas, closed under finite conjunctions, let $\Gamma^{+}$ be the set of approximations of formulas in $\Gamma$ (so with the same free variables). 
\newline
(iv) Let $\Gamma({\bar x})$ be a collection of CL-formulas with free variables $\bar x$, let $N$ be a CL-structure, and ${\bar a}$ a tuple from $N^{-}$ of the relevant size. We say that  ${\bar a}$ approximately satisfies (or realizes) 
$\Gamma({\bar x})$ in $N$, $N\models_{approx} \Gamma({\bar a})$,  if  $N\models \phi({\bar a})$ for all $\phi({\bar x})\in \Gamma^{+}$.
\newline
(v)  Two CL-structures $N_{1}$, $N_{2}$ are said to be ``approximately CL-elementarily equivalent" if they approximately satisfy the same CL sentences.
\newline
(vi) Finally we say (for CL-structures $N_{1}$, and $N_{2}$) that $N_{1}$ is an approximate elementary substructure of $N_{2}$, written $N_{1}\prec_{approx} N_{2}$, if for every CL-formula $\phi({\bar x})$ and ${\bar a}$ a tuple from $N_{1}$, $N_{1}\models_{approx} \phi({\bar a})$ iff $N_{2}\models _{approx} \phi({\bar a})$. 
\end{Definition} 

Note that if $\phi$ is a FOL $L^{-}$ formula then (by definition or convention) the only approximations of $\phi$ are itself.  Also if $D$ is a clopen subset of $C^{n}$, then $D$ is an approximation of itself.

From this point on $M$ will denote an arbitrary CL $L$-structure based on the compact space $C$.

\begin{Remark} Let $M$ be a CL-structure, $\bar a$ a tuple from $M$ and $\phi({\bar x})$  a CL-formula.
\newline
(i) If $M\models \phi({\bar a})$ then $M\models_{approx}\phi({\bar a})$.
\newline
(ii)  If $\phi$ is quantifier-free then $M\models \phi({\bar a})$ if and only if $M\models_{approx}\phi({\bar a})$.
\end{Remark}


Given a CL-structure $N$ we can introduce constant symbols for some set $A\subseteq N^{-}$ and talk about CL-formulas over $A$.

\begin{Proposition}
Let $N$ be a CL-structure, and $\Gamma({\bar x})$ be a collection of CL-formulas over $N^{-}$ which is finitely approximately (or approximately finitely) satisfiable in $N$, namely for every finite subset $\Gamma'$ of $\Gamma$, we have $N\models_{approx} \exists {\bar x}( \bigwedge\Gamma'({\bar x}))$. Then there is an approximate elementary extension $N'$ of $N$ and some tuple ${\bar a}$ from $N'$ which realizes $\Gamma({\bar x})$ in $N'$.
\end{Proposition}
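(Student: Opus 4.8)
The plan is to reduce the statement to the ordinary FOL compactness theorem together with a \emph{standard part} construction on the compact sort $C$. Since a CL-structure is in particular an FOL $L$-structure and a CL-formula an FOL $L$-formula, the approximate hypothesis can be read purely first-order: I first show that the set $\Gamma^{+}$ of all approximations of formulas in $\Gamma$ is finitely satisfiable in $N$ \emph{as an FOL $L$-structure}. Indeed, given finitely many $\psi_{1},\dots,\psi_{k}\in\Gamma^{+}$, with each $\psi_{i}$ an approximation of some $\phi_{i}\in\Gamma$, the formula $\exists\bar x\,(\psi_{1}\wedge\cdots\wedge\psi_{k})$ is itself an approximation of $\exists\bar x\,(\phi_{1}\wedge\cdots\wedge\phi_{k})$, so the hypothesis $N\models_{approx}\exists\bar x\,\bigwedge_{i}\phi_{i}$ gives $N\models\exists\bar x\,\bigwedge_{i}\psi_{i}$. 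By the FOL elementary extension theorem there is then a $\kappa$-saturated $L$-structure $N^{*}\succ_{\mathrm{FOL}}N$, with $\kappa>|C|+|L|$, and a tuple $\bar a$ from the $P$-sort realizing $\Gamma^{+}$.

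The structure $N^{*}$ is not yet a CL-structure, since its $Q$-sort is a nonstandard extension $C^{*}\supseteq C$. Here I use that $C$ is compact Hausdorff: every $e\in C^{*}$ has a unique standard part $\operatorname{st}(e)\in C$, the unique point lying in $D$ for every closed $D\subseteq C$ with $e\in D^{*}$ (finite intersection property plus the Hausdorff condition). I define $N'$ to have the same $P$-sort as $N^{*}$, $Q$-sort equal to $C$, and $f^{N'}=\operatorname{st}\circ f^{N^{*}}$; this is a genuine CL-structure. The technical core is the \textbf{standard part lemma}: for every CL-formula $\phi(\bar x)$ and every tuple $\bar b$ from the $P$-sort, $N'\models\phi(\bar b)$ iff $N^{*}\models\phi^{+}(\bar b)$ for every approximation $\phi^{+}$ of $\phi$. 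This is proved by induction on $\phi$. For an atomic $Q$-formula $D(\bar t)$ one uses that a $Q$-valued term evaluates in $N'$ to the standard part of its value in $N^{*}$, so the values realizing all closed-neighbourhood approximations land in $\bigcap\{D':D'\text{ a closed neighbourhood of }D\}=D$. For $L^{-}$-formulas there is nothing to do, as $N'$ and $N^{*}$ share the $P$-sort and the only approximation of such a formula is itself; the cases $\wedge,\vee$ are formal, and $\forall x$ is formal as well, since it merely commutes two universal quantifiers (over witnesses and over approximations).

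The one step that requires real content—and which I expect to be the main obstacle—is the existential quantifier. From ``$N^{*}\models\exists x\,\phi^{+}(\bar b)$ for each approximation $\phi^{+}$'' I must extract a \emph{single} witness $e$ satisfying all approximations of $\phi$ at once. The point is that any finitely many approximations $\phi_{1}^{+},\dots,\phi_{m}^{+}$ admit a common tighter approximation $\phi^{++}$: at each occurrence of a closed set one intersects the finitely many chosen closed neighbourhoods, and since CL-formulas are positive in the closed-set predicates, $\phi^{++}$ implies each $\phi_{i}^{+}$. Thus the partial type $\{\phi^{+}(\bar b,x)\}$ is finitely satisfiable in $N^{*}$, and $\kappa$-saturation produces the required $e$; the induction hypothesis then gives $N'\models\phi(\bar b,e)$. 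This is the only place saturation is used.

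Finally I assemble the conclusion. Applying the standard part lemma to each $\phi\in\Gamma$—whose every approximation holds at $\bar a$ in $N^{*}$ by construction—shows $N'\models\phi(\bar a)$, so $\bar a$ realizes $\Gamma$ in $N'$. For $N\prec_{approx}N'$ I combine the lemma with the FOL-elementarity $N\prec_{\mathrm{FOL}}N^{*}$: for a CL-formula $\phi(\bar x)$ and $\bar c$ from $N$, both $N\models_{approx}\phi(\bar c)$ and $N'\models_{approx}\phi(\bar c)$ unwind into families of FOL conditions over $N$ indexed by approximations (respectively, approximations of approximations), and these families are cofinal in one another because the normality of the compact Hausdorff space $C$ lets one insert an intermediate closed neighbourhood into any pair $D\subseteq U\subseteq D'$. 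Hence the two approximate satisfaction conditions coincide, giving $N\prec_{approx}N'$ and completing the proof. Apart from the existential/saturation interaction highlighted above, every step is routine bookkeeping with closed neighbourhoods in $C$.
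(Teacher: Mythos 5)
Your proposal is correct and follows essentially the same route as the paper: realize the set of approximations in a sufficiently saturated FOL elementary extension $N^{*}$, build $N'$ by composing $f^{N^{*}}$ with the standard part map $C^{*}\to C$, and use saturation at the existential step to merge all approximations into a single witness, which is exactly the paper's ``key step.'' You merely carry out in full the parts the paper sketches or leaves to the reader (the inductive standard-part lemma, and part (i) via normality of $C$ and the cofinality of approximations-of-approximations among approximations), so there is nothing to correct.
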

\begin{proof}  Let $\Gamma^{+}$ be the collection of approximations to the collection of finite subsets of $\Gamma$.
Consider $N$ as a FOL $L$-structure. Consider $\Gamma^{+}$ as a set of $FOL$ $L$-formulas over $N$. By FOL 
compactness we find a sufficiently saturated FOL elementary extension $N^{*}$ of $N$ and ${\bar a}$ in $N^{*}$ which 
realizes $\Gamma^{+}$.  Let $C^{*}$ be the interpretation of the $Q$ sort in $N^{*}$ and $f^{*}$ the 
interpretation in $N^{*}$ of $f$ (from the $P$ sort to the $Q$ sort).  As $C^{*}$ is equipped with all the $L$-structure 
coming from $C$ we have the standard part map $\pi$ from $C^{*}$ onto $C$. Let $f'$ be the composition of $f^{*}$ 
with $\pi$ from $(N^{*})^{-}$ to $C$. We let $N' = ((N^{*})^{-},f,C)$ as a CL-structure. We have to show that
\newline
(i)  $N'$ is an approximate elementary extension of $N$ and
\newline
(ii) ${\bar a}$  
(which is in $N'^{-}$) realizes  $\Gamma({\bar x})$ in $N'$. 

\vspace{2mm}
\noindent
(i) is left to the reader and only uses that $N^{*}$ is an FOL elementary extension of $N$  (not the saturation of $N^{*}$).
\newline
We now consider (ii) which {\em does} use saturation of $N^{*}$ and we will do just a special case, namely that of a single formula of the form  $\phi({\bar x})$:  $\exists {\bar y} (f({\bar x}, {\bar y})\in D)$ where ${\bar x}$ is an $n$-tuple, ${\bar y}$ a $k$-tuple, and $D$ a (predicate symbol for) a closed subset of $C^{n+k}$.
The approximations to $\phi({\bar x})$ have the form $\phi'({\bar x})$: $\exists {\bar y}(f({\bar x}, {\bar y}) \in D')$ where $D'$ is an approximation to $D$. Note that a finite intersection of approximations to $D$ is also an approximation to $D$.  By the assumption that ${\bar a}$ realizes $\Gamma^{+}$ in $N^{*}$ and saturation of $N^{*}$ there is ${\bar b}$ in $(N^{*})^{-}$ such that $f^{*}({\bar a}, {\bar b}) \in (D')^{*}$ for all approximations $D'$ to $D$.
But $D' = \pi((D')^{*})$ for each such approximation $D'$ of $D$ (as $D'$ is compact). Hence $f'({\bar a}, {\bar b})\in D'$ for each approximation $D'$ of $D$, so $f'({\bar a}, {\bar b})\in D$, whereby $N'\models \phi({\bar a})$. 

 The case of general $\phi\in \Gamma$ is proved by induction, but this existential case is the key step, and the ``positive" nature of the logic is precisely set up to allow the above argument to go through.
\end{proof}

\begin{Definition} The CL-structure $M$ is said to be  $\kappa$-saturated, if  whenever $\Gamma({\bar x})$ is a collection of CL-formulas over a subset $A$ of $M^{-}$ of cardinality $<\kappa$ and 
$\Gamma$ is finitely approximately satisfiable in $M$, then $\Gamma$ is satisfiable (or realized) in $M$.
\end{Definition}

\begin{Proposition} (i) Every CL-structure has a $\kappa$-saturated approximate elementary extension.
\newline
(ii) Suppose  $N^{*}$ is a $\kappa$-saturated FOL $L$-structure, and $N'$ is the CL-structure obtained from $N^{*}$ by applying the standard part map to the $Q$ sort (as in the proof of Proposition 2.6). Then $N'$ is $\kappa$-saturated as a CL-structure.
\newline
(iii) If $M$ is an $\omega$-saturated CL-structure and $\phi({\bar x})$ is a CL-formula and ${\bar a}$ a tuple from $M^{-}$ then $M\models_{approx} \phi({\bar a})$ iff $M\models \phi(\bar a)$. 
In particular for $\sigma$ an $L$-sentence $M\models_{approx} \sigma$ iff $M\models\sigma$

\end{Proposition}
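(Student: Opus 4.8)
The plan is to isolate a single equivalence relating genuine CL-satisfaction in the standard-part structure $N'$ to FOL-satisfaction of \emph{all} approximations in $N^{*}$, prove (ii) and (iii) from (variants of) it, and obtain (i) as a formal consequence of (ii). Throughout one uses that $N'=((N^{*})^{-},f',C)$ shares its $P$-sort $L^{-}$-structure with $N^{*}$, so $L^{-}$-formulas and quantifiers over $P$ behave identically in the two structures and only the $Q$-valued atomic formulas differ, through $f'=\pi\circ f^{*}$; I also assume, as in Proposition 2.6, that the standard part map $\pi\colon C^{*}\to C$ is available. The key lemma I would establish is that for every CL-formula $\theta(\bar x)$ and every tuple $\bar c$ from the common $P$-universe,
\[
N'\models\theta(\bar c)\quad\Longleftrightarrow\quad N^{*}\models\theta'(\bar c)\ \text{for every approximation }\theta'\text{ of }\theta .
\]

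I would prove both implications simultaneously by induction on $\theta$. For an $L^{-}$-formula the only approximation is $\theta$ itself and the two $P$-sorts coincide, so there is nothing to check. For an atomic $Q$-formula $D(t(\bar x))$ the content is exactly the behaviour of $\pi$: $N'\models D(t(\bar c))$ means $\pi(t^{N^{*}}(\bar c))\in D$, and by compactness and Hausdorffness of $C$ (a closed set is the intersection of its closed neighbourhoods, and $\pi(z)\in D$ iff $z\in (D')^{*}$ for every closed neighbourhood $D'$ of $D$) this is equivalent to $N^{*}\models D'(t(\bar c))$ for all approximations $D'$ of $D$. Conjunctions are immediate; disjunctions use a contrapositive argument (if $N'$ models neither disjunct, the inductive hypothesis supplies for each an approximation failing in $N^{*}$, and their disjunction is an approximation of $\theta$ failing in $N^{*}$), requiring no saturation and reflecting the positive design of the logic; the universal case reduces to the inductive hypothesis pointwise in the witness. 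The one genuinely hard step, and the main obstacle, is the existential case of the right-to-left implication: given $\theta=\exists y\,\theta_{1}$ with $N^{*}\models\exists y\,\theta_{1}'(y,\bar c)$ for every approximation $\theta_{1}'$, I must produce a \emph{single} $b$ witnessing all the approximate constraints at once. This is exactly the step that uses saturation of $N^{*}$, as in Proposition 2.6: the FOL type $p(y)=\{\theta_{1}'(y,\bar c):\theta_{1}'\text{ an approximation of }\theta_{1}\}$ over the finitely many parameters $\bar c$ is finitely satisfiable (finitely many approximations admit a common refinement, since a finite intersection of closed neighbourhoods is again a closed neighbourhood, and that refinement is realized by hypothesis), so by $\kappa$-saturation of $N^{*}$ it is realized by some $b$; the inductive hypothesis then gives $N'\models\theta_{1}(b,\bar c)$, hence $N'\models\theta(\bar c)$.

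Granting the equivalence, I would finish (ii) as follows. Given $\Gamma(\bar x)$ over $A$ with $|A|<\kappa$, finitely approximately satisfiable in $N'$ and (which we may assume) closed under finite conjunctions, set $\Sigma(\bar x)=\{\gamma':\gamma\in\Gamma,\ \gamma'\text{ an approximation of }\gamma\}$, an FOL type over $A$ in $N^{*}$. To see $\Sigma$ is finitely satisfiable, given an approximation $\gamma'$ of some $\gamma\in\Gamma$, normality of $C$ lets me interpolate a finer approximation $\gamma_{0}$ of $\gamma$ of which $\gamma'$ is itself an approximation; finite approximate satisfiability of $\Gamma$ yields a tuple with $N'\models\gamma_{0}$, and the left-to-right direction of the equivalence then gives $N^{*}\models\gamma'$ on that tuple. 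By $\kappa$-saturation of $N^{*}$ there is $\bar a$ realizing $\Sigma$, so $N^{*}$ models every approximation of every $\gamma\in\Gamma$ at $\bar a$; by the right-to-left direction, $N'\models\gamma(\bar a)$ for all $\gamma\in\Gamma$, i.e. $\bar a$ realizes $\Gamma$ in $N'$. For (i), take any FOL $\kappa$-saturated elementary extension $N^{*}$ of $M$ and form $N'$ by the standard part construction of Proposition 2.6; the part of 2.6 left to the reader gives $M\prec_{approx}N'$, and (ii) shows $N'$ is $\kappa$-saturated.

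Finally (iii) is the same induction with CL-$\omega$-saturation (Definition 2.7) in place of the FOL saturation above, and needs neither $N^{*}$ nor $\pi$ since the $Q$-sort of $M$ already equals $C$. The forward direction is Remark 2.5(i). For the converse the $L^{-}$, atomic, conjunction, disjunction and universal cases are verbatim as in the lemma; for $\theta=\exists y\,\theta_{1}(y,\bar x)$ the hypothesis $M\models_{approx}\exists y\,\theta_{1}(y,\bar a)$ says precisely that the single-formula type $\{\theta_{1}(y,\bar a)\}$ over the finite parameter set $\bar a$ is finitely approximately satisfiable in $M$, so $\omega$-saturation realizes it and directly produces $b$ with $M\models\theta_{1}(b,\bar a)$, whence $M\models\phi(\bar a)$; the final clause about sentences is the case of empty $\bar a$.
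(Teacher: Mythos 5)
Your proof is correct and follows essentially the route the paper intends: the paper's proof of Proposition 2.8 simply defers (i) and (ii) to Proposition 2.6 and its proof, and declares (iii) a routine induction, and your key lemma is precisely the full inductive generalization of the existential ``standard part plus saturation'' step carried out there, with (i) obtained from (ii) via the construction of 2.6 exactly as the paper suggests. Your only genuinely added detail --- the normality-based interpolation of a finer approximation $\gamma_0$ between $\gamma$ and $\gamma'$ when verifying finite satisfiability of $\Sigma$ in $N^{*}$ --- is a correct and necessary filling-in of a step the paper leaves implicit.
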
 
\begin{proof}  (i) and (ii) follow from Proposition 2.6 and its proof.
The proof of (iii) is routine and by induction on the complexity of $\phi({\bar x})$. 

\end{proof}

We will be mainly interested in saturated CL-structures, but in the background are ``complete" CL-theories which are discussed now.  By a CL-theory we will mean a set of CL-sentences with a CL-model. The CL-theory $Th_{CL}(M)$ of a CL-structure is such a CL-theory, but need not be ``complete". 

\begin{Definition} (i) Let $M$ be a CL-structure. By the approximate CL-theory of $M$, $Th_{CL, approx}(M)$ we mean the set of CL-sentences $\sigma$ such that
$M\models_{approx} \sigma$.
\newline
(ii) Likewise if $T$ is a CL-theory, by an approximate model of $T$ we we mean a CL-structure $N$ such that $N\models_{approx} \sigma$ for all $\sigma\in T$. 
\end{Definition} 

Of course two CL-structures are approximately elementarily equivalent if they have the same approximate CL-theory.

\begin{Lemma}  Let $M$ be a CL-structure and $T = Th_{CL,approx}(M)$. Then $T$ is a maximal CL-theory, and every maximal CL-theory is of this form.
\end{Lemma}
\begin{proof} 
For the first part, suppose $\sigma$ is a $CL$-sentence such that  it is not the case that $M\models_{approx}\sigma$. We will show that $Th_{CL, approx}(M)\cup\{\sigma\}$ is inconsistent in the sense of having no $CL$ model (in fact no model).
We will just consider the special case where $\sigma$ has the form $\exists x (\rho(x)\wedge f(x)\in D)$ where $\rho(x)$ is a $FOL$ $L^{-}$-formula.  By assumption for some approximation $D'$ to $D$, $\exists x(\rho(x)\wedge f(x)\in D')$ is 
not true in $M$. Now for some open set $U$ in $C$ we have $D\subseteq U \subseteq D'$, whereby $M\models \forall 
x(\rho(x)\to x\in U^{c})$ where $U^{c}$ is the complement of $U$.  This latter CL-sentence is clearly inconsistent with $\sigma$.

Now for the second part: Suppose that $T$ is maximal, and let $M$ be a CL-structure which is a model of $T$.  Let $M\models_{approx}\sigma$. Then by Proposition 2.8, there is a CL-model $N$ of $T$ such that $N\models \sigma$, so by maximality $\sigma\in T$. Hence $T = Th_{CL, approx}(M)$.

\end{proof} 

\begin{Remark} More generally, if $M$ is a CL-structure and ${\bar a}$ is a tuple from $M^{-}$ and $\phi({\bar x})$ is a CL-formula such that it is not the case that $M\models_{approx}\phi({\bar a})$, then there is a CL-formula $\psi({\bar x})$ such that $M\models \psi({\bar a})$, and $(\phi({\bar x})\wedge \psi({\bar x}))$ is inconsistent. 
This is proved by induction on $\phi$, and gives a proper proof of the previous lemma. 
\end{Remark}

Bearing in mind the lemma above we will call CL-theories of the form $Th_{CL,approx}(M)$, {\em complete CL-theories}.

\begin{Remark} (i) If $T$ is a complete CL-theory, and $M$ is an $\omega$-saturated approximate model of $T$, then $T = Th_{CL}(N)$.
\newline 
(ii)  If $T$ is a complete CL-theory, then there is a closed (so compact) subspace $C_{T}$ of $C$ such that for every approximate model $M = (M^{-},f,C)$ of $T$, $f(M^{-})$ is dense in $C$, and moreover if $M$ is $\omega$-saturated, $f(M^{-}) = C$. 
\end{Remark}

\begin{proof} (i) is obvious from the definitions and Proposition 2.8. 
\newline
For (ii), fix an approximate model $M = (M^{-}, f, C)$ of $T$, and let $C_{0} = cl(f(M^{-}))$. So for every $c\in C_{0}$ and closed neighbourhood $D$ of $\{c\}$, $M\models \exists x (f(x)\in D)$. So the sentence $\exists x (f(x) = c)$ is approximately true in $M$ so is in $T = Th_{CL}(M)$, so approximately true in all models of $T$, and actually true in any $\omega$-saturated model of $T$, by 2.8. 
\end{proof}

So  given the complete CL-theory $T$, and $C_{T}$ as in Remark 2.12(ii), we may assume that in the underlying language $L$, $C = C_{T}$, namely that in a saturated model $f$ is surjective.

There is no harm in assuming that every complete CL-theory $T$  has $\kappa$-saturated models of cardinality $\kappa$ for suitable $\kappa$. And  two such models of $T$ will be isomorphic.  As usual given a complete CL-theory $T$ we feel free to work in such a $\kappa$-saturated model ${\bar M}$ of size $\kappa$ for some very big $\kappa$. 
But we should realize that arbitrary models of $T$ (of cardinality $\leq \kappa$) will in general be only approximately elementarily embeddable in $\bar M$.

\vspace{2mm}
\noindent
We now discuss types and type spaces. 
 For $A\subset \bar M$ and ${\bar b}$ a tuple from $M$, we will compute  $tp({\bar b}/A)$ in ${\bar M}$ and it is just the collection of CL-formulas $\phi({\bar x})$ over $A$ such that
$\bar M \models \phi({\bar b})$.   Note that, as in Remark 2.11 such types are ``maximal consistent". 

\vspace{2mm}
\noindent
{\bf  We now fix a complete CL theory  $T$ and saturated model ${\bar M}$ as above.}
\newline
${\bar x} = (x_{1},..,x_{n})$ is a tuple of variables ranging over the $P$ sort). 

\begin{Definition} $S_{\bar x}(A)$ (or $S_{n}(A)$) is the set of $tp({\bar b}/A)$ for ${\bar b}$ in $\bar M$, equipped with the topology:  $X\subseteq  S_{\bar x}(A)$ is a basic closed set iff there is some CL-formula $\phi({\bar x})$ over $A$ such that  $X = \{tp({\bar b}/A): {\bar M}\models \phi({\bar b})\}$. 
\end{Definition}

Then by the compactness theorem $S_{\bar x}(A)$ is a compact Hausdorff space.  If $A$ is omitted we mean types over $\emptyset$. 

If  $\Phi$ is a family of CL-formulas, then we can talk about
$tp_{\Phi}({\bar b}/A)$ (or sometimes $\Phi$-$tp({\bar b})$): it is the collection of $CL$-formulas with parameters, $\phi({\bar x}, {\bar a})$ such that $\phi({\bar x}, {\bar y})$ is in $\Phi$, ${\bar a}$ is a tuple from $A$ and ${\bar M}\models \phi({\bar b}, {\bar a})$. 

\begin{Definition} We will say that $T$ has ``quantifier elimination" down to formulas in $\Phi$, if whenever ${\bar b}$ and ${\bar c}$ are finite tuples of the same length, and
$tp_{\Phi}({\bar b}) = tp_{\Phi}({\bar c})$ then $tp({\bar b}) = tp({\bar c})$. 
\end{Definition} 

This notion is rather more meaningful when the collection $\Phi$ has some closure properties.  We will call the collection $\Phi$ of CL-formulas closed if $\phi$ contains the quantifier-free formulas, and is closed under $\wedge$, $\vee$, and $\neg$ (i.e. if $\phi$ is a FOL $L^{-}$-formula which is in $\Phi$, then so is $\neg\phi$), as well as approximations.
Fix the tuple ${\bar x}$ of variables. Then $S_{n, \Phi}$ denotes the set of  $tp_{\Phi}({\bar b})$ for ${\bar b}$ an $n$-tuple from ${\bar M}$.  Assuming that $\Phi$ is closed,   $S_{n,\Phi}(T)$ is a compact Hausdorff space when equipped with the topology where a basic closed set is the collection of $\Phi$-types containing a given formula $\phi({\bar x})\in \Phi$.

\begin{Lemma} Assume that $\Phi$ is closed. Then $T$ has quantifier elimination   down to formulas in $\Phi$ if for every CL-formula $\phi({\bar x})$ and approximation to it, $\phi'({\bar x})$ there is a formula $\theta({\bar x})\in \Phi$ such that in any approximate model $M$ of $T$ for any tuple ${\bar a}$ from $M$, we have: if $M\models_{approx} \phi({\bar a})$ then $M\models_{approx}\theta({\bar a})$, and if $M\models_{approx}\theta({\bar a})$ then $M\models_{approx} \phi'({\bar a})$. 
\end{Lemma}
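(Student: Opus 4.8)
The plan is to prove the stated sufficient condition by a symmetric transfer-of-truth argument carried out inside the saturated model ${\bar M}$, exploiting the fact that in ${\bar M}$ approximate satisfaction and genuine satisfaction coincide. Concretely, suppose $tp_{\Phi}({\bar b}) = tp_{\Phi}({\bar c})$; by symmetry in ${\bar b}, {\bar c}$ it suffices to show that for an arbitrary CL-formula $\phi({\bar x})$, ${\bar M}\models \phi({\bar b})$ implies ${\bar M}\models\phi({\bar c})$, and then to swap the roles of the two tuples.

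First I would reduce the goal ${\bar M}\models\phi({\bar c})$ to checking ${\bar M}\models\phi'({\bar c})$ for every approximation $\phi'$ of $\phi$: since ${\bar M}$ is $\omega$-saturated, Proposition 2.8(iii) gives ${\bar M}\models\phi({\bar c})$ iff ${\bar M}\models_{approx}\phi({\bar c})$, and the latter unwinds, by the definition of approximate satisfaction (Definition 2.5(iv)), to ${\bar M}\models\phi'({\bar c})$ for all such $\phi'$. The conceptual content here is that $\phi$ is the intersection of its approximations: the logic being positive, one cannot negate $\phi$, so one instead recovers it from above. The same equivalence turns the hypothesis ${\bar M}\models\phi({\bar b})$ into ${\bar M}\models_{approx}\phi({\bar b})$.

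The heart of the argument is then a single application of the sandwich hypothesis. Fix an approximation $\phi'$ of $\phi$ and let $\theta({\bar x})\in\Phi$ be the formula supplied by the hypothesis, so that (approximately) $\phi$ implies $\theta$ and $\theta$ implies $\phi'$. From ${\bar M}\models_{approx}\phi({\bar b})$ the first implication yields ${\bar M}\models_{approx}\theta({\bar b})$, hence ${\bar M}\models\theta({\bar b})$ by Proposition 2.8(iii). As $\theta\in\Phi$, this says $\theta\in tp_{\Phi}({\bar b}) = tp_{\Phi}({\bar c})$, so ${\bar M}\models\theta({\bar c})$, and therefore ${\bar M}\models_{approx}\theta({\bar c})$ again by 2.8(iii). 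The second implication of the sandwich now gives ${\bar M}\models_{approx}\phi'({\bar c})$, and a final appeal to 2.8(iii) (as $\phi'$ is itself a CL-formula) gives ${\bar M}\models\phi'({\bar c})$. Since $\phi'$ was an arbitrary approximation of $\phi$, the reduction of the previous paragraph closes the loop and delivers ${\bar M}\models\phi({\bar c})$; symmetry then yields $tp({\bar b}) = tp({\bar c})$.

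I do not expect a serious obstacle: the only thing to be careful about is the bookkeeping between $\models$ and $\models_{approx}$, and the requirement that the separating formula $\theta$ lie in $\Phi$ so that agreement of $\Phi$-types can be invoked --- which is exactly what the hypothesis guarantees. The genuinely load-bearing idea, rather than a difficulty, is that positivity forces us to approximate $\phi$ from above by $\Phi$-definable sets, via $\phi\Rightarrow\theta\Rightarrow\phi'$, and to reconstruct $\phi$ as the intersection of its approximations inside ${\bar M}$. Closure of $\Phi$ and compactness of $S_{n,\Phi}$ are in the background ensuring the type-space formalism is well posed, but the transfer itself uses only the membership $\theta\in\Phi$ together with the coincidence of the two satisfaction relations in the saturated model.
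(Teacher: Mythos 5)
Your argument is correct for the implication as literally worded, and the $\models$/$\models_{approx}$ bookkeeping is handled properly: in the saturated model ${\bar M}$ the two satisfaction relations coincide (Proposition 2.8(iii)), so from ${\bar M}\models \phi({\bar b})$ you get ${\bar M}\models\theta({\bar b})$, hence $\theta\in tp_{\Phi}({\bar b}) = tp_{\Phi}({\bar c})$, hence ${\bar M}\models\phi'({\bar c})$ for every approximation $\phi'$ of $\phi$; and since approximate satisfaction of $\phi$ just means satisfaction of all its approximations, 2.8(iii) recovers ${\bar M}\models\phi({\bar c})$. As you yourself note, this direction is essentially definitional once 2.8(iii) is in hand.

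However, this is not the implication the paper actually proves. Despite the word ``if'' in the statement, the paper's proof runs in the converse direction: it \emph{assumes} quantifier elimination down to $\Phi$ and \emph{constructs} the sandwiching formula $\theta$. The ``if'' is evidently a slip for ``only if'' (or the lemma is intended as an equivalence with only the nontrivial direction proved): the proof opens with ``The quantifier elimination assumption implies that the restriction map from $S_{n}(T)$ to $S_{n,\Phi}(T)$ is a homeomorphism,'' and Remark 3.7 invokes the lemma in exactly this direction, deducing the approximation property for $T$ from the quantifier elimination established by the back-and-forth argument of Theorem 3.3. The substantive content your proposal does not reproduce is the type-space compactness argument: under QE the restriction map $S_{n}(T)\to S_{n,\Phi}(T)$ is a continuous bijection of compact Hausdorff spaces, hence a homeomorphism; the closed set defined by $\phi$ in $S_{n,\Phi}(T)$ is an intersection $\bigwedge_{i\in I}\psi_{i}$ of $\Phi$-formulas; one shrinks the given approximation $\phi'$ so that it is the closure of an open neighbourhood $U$ of $\phi$ in the type space, covers the compact complement of $U$ by the complements of the $\psi_{i}$, and extracts a finite conjunction $\theta$ with $\phi\subseteq\theta\subseteq\phi'$; Proposition 2.8(iii) then transfers the chain of implications from truth in ${\bar M}$ to approximate satisfaction in arbitrary approximate models of $T$. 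So your proof is valid for the statement as printed, but it establishes the easy converse of the direction the paper proves and later uses; to match the paper's content you would need the compactness argument just described.
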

\begin{proof} We will  fix $n$ and consider CL-formulas with $n$ variables and work in the type spaces  $S_{n}(T)$, 
and $S_{n,\Phi}(T)$.  We identify formulas with the sets they define. The ``quantifier elimination" assumption implies that the restriction map from $S_{n}(T)$ to $S_{n,\Phi}(T)$ is a homeomorphism. Let $\phi({\bar x})$ be a CL-formula, so $\phi$ defines a closed set in $S_{n,\Phi}(T)$, namely given by a possibly infinite conjunction $\bigwedge_{i\in I}\psi_{i}({\bar x})$ of $\Phi$-formulas.  If this was a finite conjunction we would be finished, but it need not be, so we require some approximations.  Choose an arbitrary approximation $\phi'$ to $\phi$. As we can always make $\phi'$ smaller, we may assume that $\phi'$ is the closure of an open neighbourhood $U$ of $\phi$  (in the type space).  The complement of $U$ is closed, so compact and covered by the union of the complements of the $\psi_{i}$'s, so by a finite subunion. It follows 
that  for some $n$ we have that  $\phi({\bar x}) \subseteq \bigwedge \psi_{i}({\bar x}) \subseteq \phi'({\bar x})$.
Let $\theta({\bar x}) = \wedge_{i=1,..,n}\psi_{i}({\bar x})$.  So now working with truth of formulas in the saturated model ${\bar M}$ of $T$ we have that $\phi({\bar x})$ then $\theta({\bar x})$ implies $\phi'({\bar x})$.  This suffices, using Proposition 2.8 (iii).

\end{proof}

Finally we discuss stability. We have already discussed types, and thus we obtain the notion of an {\em indiscernible sequence}. 
\begin{Definition}  $T$ is {\em stable} if whenever $({\bar a}_{i},{\bar b}_{i}): i < \omega)$ is an indiscerible sequence
then $tp(({\bar a}_{i}, {\bar b}_{j})) = tp(({\bar a}_{j}, {\bar b}_{i}))$ whenever $i < j$. 
\end{Definition}

One can adapt this definition to give stability of any given CL-formula $\phi({\bar x}, {\bar y})$. 
In any case, the definition makes sense in the classical FOL context where it is correct. 

We now give some general remarks, questions and problems:
\begin{Problem} One possible point of view is that the formalism developed in this section could be a mechanism for adjoining new KP-strong types to a first order theory.  So one should formulate a version of the set-up where any homeomorphism of $C$ with itself lifts to an  automorphism of a saturated model $M$.  It can probaby be done by working, not with all closed subsets of $C^{n}$ but with all closed subsets invariant under homeomorphisms of $C$. 
\end{Problem}

Secondly given a CL-structure $M = (M^{-},f,C)$ what should we mean by the ``induced first order structure on $M^{-}$".

\begin{Definition}  Let $M = (M^{-},f,C)$ be a CL-structure, and $T = Th_{CL, approx}(M)$. Then by the induced $FOL$-structure on $M^{-}$ we mean that obtained by adding predicates corresponding to clopen subsets of the various type spaces $S_{n}(T)$. 
\end{Definition}

The following should be checked: 

\begin{Remark}  Suppose that $M^{-}$ is a saturated FOL structure in language $L^{-}$. Let  $E$ be a bounded $\emptyset$-type-definable equivalence relation on $M^{-}$ (or on some sort in $M^{-}$). Then  adjoining $X/E$ as a new sort adds no first order new structure to $M^{-}$ (modulo working over additional parameters)
\end{Remark}

There has been a considerable amount of work on stable expansions of the structure $(\Z., +)$.  See \cite{Conant} for example. A natural generalization is:
\begin{Problem} What are the stable CL-expansions  $((\Z,+),f,C)$ of $(\Z,+)$?
\end{Problem}

It will be routine to define the notion of $1$-basedness for a stable CL-theory $T$. For example, working in a saturated model $N$, and a stationary type  $tp({\bar a}/M)$ where $M$ is an (approximate) elementary substructure of $N$, the definitions of this type are over the bounded closure of ${\bar a}$
 The content of the main result of this paper is that the CL-theory of an abelian structure equipped with a homomorphism to a compact Hausdorff group is not only stable but also $1$-based.
 It would be nice to have a converse, extending the theorems about $1$-based groups in FOL to the current CL context.
\begin{Problem} Suppose that $G$ is an FOL expansion of a group, and $f:G\to C$ is a map to a compact {\em space} $C$. Suppose that the CL-theory of $M = (G,f,C)$ is stable and $1$-based.  Show that  $G$ is essentially an abelian structure {\em and} up to a suitable sense of piecewise translation $C$ has a group structure compatible with its topology and $f$ is a homomorphism.
\end{Problem}

\vspace{5mm}
\noindent
The version of continuous logic given in the current paper is relatively self contained and should be directly accessible to any model theorist.
But we thought it would be a good idea to give a translation with the treatment in \cite{BY-B-H-U}.  This last part of Section 2 is written ONLY for those conversant with the theory in \cite{BY-B-H-U}, which we will call temporarily \cite{BY-B-H-U}-continuous logic. 

Consider a CL-structure $M = (M^{-},f,C)$ in the sense of this paper, and let us assume for now that the compact space $C$ is precisely the interval $[0,1]$.   So we can view $M$ as a \cite{BY-B-H-U}-continuous logic structure, by thinking of the $L^{-}$ relations as  $[0,1]$-valued relations (which take only values $0$ for true and $1$ for false), and $f$ as a $[0,1]$-valued relation on the universe of $M^{-}$. Moreover the metric on $M^{-}$ is discrete (again with values $0$, $1$). Work in \cite{BY-B-H-U}- continuous logic where we allow all continuous functions from $[0,1]^{n}$ to $[0,1]$ as connectives.  Remember that \cite{BY-B-H-U}- continuous logic formulas are $[0,1]$-valued, whereas our CL-formulas are true or false. 

\begin{Lemma} There is a one-one correspondence, up to equivalence,  between CL-formulas and \cite{BY-B-H-U}- continuous logic formulas  $\phi({\bar x}) \to  \phi^{*}({\bar x})$  such that for any CL-structure  $M = (M^{-},f,[0,1])$,   and tuple ${\bar a}$ from $M$, $M\models_{approx}\phi({\bar a})$ if and only if in $M$ viewed as a \cite{BY-B-H-U}-continuous logic structure, the condition $\phi^{*}({\bar x}) = 0$ holds of ${\bar a}$, that is the value of the formula $\phi^{*}({\bar x})$ at ${\bar a}$ in the structure $M$ is $0$. 
\end{Lemma}
\begin{proof} 
First how to go from CL-formulas to \cite{BY-B-H-U}-continuous logic formulas.
First, view each $FOL$ $L^{-}$-formula as a $\{0,1\}$ valued \cite{BY-B-H-U}-continuous logic formula (as usual).
Now replace $f$ by a $[0,1]$-valued relation $R_{f}$. Given an atomic CL-formula  $(f(x_{1}),.., f(x_{n})\in D$, let $u$ be a continuous function from $[0,1]^{n}$ to $[0,1]$ such that $D = u^{-1}(0)$, and replace any occurrence of  $(f(x_{1}),.., f(x_{n}))\in D$, by the \cite{BY-B-H-U}-continuous logic formula  $u(R_{f}(x_{1}),..,R_{f}(x_{n}))$.
Replace $\wedge$, $\vee$ by $max$, $min$.  And replace $\exists$, $\forall$ by the ``quantifiers" $inf$ and $sup$.

\vspace{2mm}
\noindent
As an example, consider the case of of a CL-formula $\phi({\bar x})$:  $\exists y (\rho({\bar x},y) \wedge f(y)\in D)$. 
Then $\phi^{*}({\bar x})$ is $inf_{y}(max(\rho({\bar x},y), u(R_{f}(y)))$ where $u$ is a continuous function $[0,1]\to [0,1]$ such that $u^{-1}(0) = D$, and as above $\rho$ is now considered as a $\{0,1\}$ valued relation.  
Then  $\phi^{*}({\bar a})= 0$ means that for any small $\epsilon > 0$, there is $b$ such that $M\models \rho({\bar a},b)$ and $u(R_{f}(b))\in [0,\epsilon]$, and via the choice of $u$, this means that for any approximation $D'$ to $D$,  $M\models \exists y(\psi({\bar a},y) \wedge f(y)\in D')$. 

\vspace{2mm}
\noindent
The other direction is similar and left to the reader, except that the continuous function connectives $[0,1]^{n} \to [0,1]$ are replaced by their graphs in $[0,1]^{n+1}$ (and existential quantifiers). 
\end{proof}

\vspace{5mm}
\noindent
When $C$ is an arbitrary compact  space, we can replace $f$ by the collection of all functions $g$ from $M^{-}$ to $[0,1]$, such that for some continuous function $\pi:C\to [0,1]$, $g = \pi\circ f$.

\vspace{5mm}
\noindent
Another option, suggested by Ward Henson, is as follows: if $C$ is a metrizable compact space, then simply view $M$ as a $2$-sorted \cite{BY-B-H-U}- continuous logic structure where the second sort is equipped with its metric as well as all continuous functions to $[0,1]$.  In general, $C$ will be an inverse limit of metrizable compact spaces and add all these as sorts, as well as the connecting maps.

\section{Abelian structures with a homomorphism to a compact group}
Having introduced our formalism in Section 2, we can now be more precise about the results of this paper. 

We will take an {\em abelian structure} $A$  to be an abelian group $(A,+,-,0)$ equipped with a family ${\mathcal S}$ of subgroups of various Cartesian powers of $A$.  We treat $A$ as a structure in a language $L^{-}$  (with equality) which has function symbols $+$, $-$, constant symbol $0$ and predicate (or relation) symbols $S$ for each $S\in {\mathcal S}$. One can identify equality with a distinguished subgroup $S_{=}$ of $A^{2}$, if one wishes.

An atomic $L^{-}$ formula is by definition a formula of the form $P(t_{1},..,t_{n})$ where $t_{1},..,t_{n}$ are terms and $P\in {\mathcal S}$ is $n$-ary.  And by definition a positive primitive $L^{-}$-formula is something of the form 
$\exists {\bar y}\phi({\bar x}, {\bar y})$ where $\phi$ is a finite conjunction of atomic formulas, and we exhibit all the free variables of $\phi$ by the tuple ${\bar x}, {\bar y}$.  

On the other hand given a ring $R$ (with a $1$), the language $L_{R}$ of $R$-modules has again symbols $+,-, 0, 1$ as well as unary function symbols $\lambda_{r}$ for each $r\in R$.  The only relation symbol is equality. One again has the notion of a positive primitive formula Any (left) $R$ module can be considered as an abelian structure by replacing each $\lambda_{r}$ by its graph. Then the positive primitive formulas of $L_{R}$ coincide with those in the corresponding abelian structure language, insofar as their interpretations are concerned.  So in this sense $R$-modules are a special case of abelian structures, and from now on we will only talk about (enriched) abelian structures. 

It is well-known and immediate to check that if $\phi({\bar x}, {\bar y})$ is a $pp$ formula of $L^{-}$, then $\phi({\bar x}, {\bar 0})$ defines a subgroup of $A^{n}$ (a so-called $pp$-subgroup) and that for any tuple ${\bar b}$ from $A$ (of length that of ${\bar y}$), the formula $\phi({\bar x}, {\bar b})$ defines a coset (translate) of the subgroup.

We now add a homomorphism $f$ from $A$ to a compact Hausdorff group which we call $\mathbb T$, and we are interested in $M = (A,f,{\mathbb T})$ as a CL-structure and its theory  $T = Th_{CL}(M)$  as a CL-theory. (So when we write $A$ we have implicitly all its abelian structure.)  So $T$ is a complete CL-theory in a language $L$, which is fixed from now on.  From Remark 2.12 (ii), we may assume that $f(A)$ is dense in ${\mathbb T}$. 

\begin{Remark}  Let $(B,g,{\mathbb T})$ be an approximate model of $T$. Then $(B,+,-,0)$ is an abelian group, the interpretations of the distinguished predicate symbols of $L^{-}$ are subgroups (or the appropriate $B^{n}$), and $g$ is a homomorphism (with dense image).
\end{Remark}
\begin{proof} As the first order $L^{-}$ theory of $B$ is part of $T$, the first two parts are immediate.
For the last part, note that  the expression $\sigma$: $\forall x\forall y(f(x+y) -f(x) - f(y) = 0) $ is a CL-sentence which is in $T$.  If $(B,g, {\mathbb T})$ is an approximate model of $\sigma$, then it is also  a model of $\sigma$, so $g$ is a homomorphism. $g(B)$ being dense follows from 2.12 (ii).
\end{proof}

\begin{Definition}  By a $pp^{*}$-formula we mean informally a $pp$-$L^{-}$-formula, $\exists {\bar x}(\phi({\bar x}, {\bar y}))$ where $\phi$ is a conjunction  of atomic $L^{-}$ formulas, {\em and where in addition} we specify the values under $f$ of none, some, or all, of the variables in ${\bar x}$, ${\bar y}$.   More formally, a $pp^{*}$-formula, is a CL-formula of the following form:  
$$\exists {\bar y}(\phi({\bar x}, {\bar y}) \wedge \bigwedge_{i\in I}(f(x_{i}) = c_{i}) \wedge \bigwedge_{j\in J}(f(y_{j}) = d_{j}))$$
where $\phi({\bar x}, {\bar y})$ is a finite conjunction of atomic $L^{-}$-formulas, $I$ is a subset of the indices of ${\bar x}$, and $J$ a subset of the indices of ${\bar y}$.
\end{Definition}

We now pass to a saturated model  of $T$, which we again call $M = (A,f,{\mathbb T}$ and now $f$ is surjective. 
By a $pp$-formula we mean a $pp$ $L^{-}$ formula. Such a formula defines a subgroup of the relevant $A^{n}$ called a $pp$-subgroup.  By a $pp^{-}$ formula we mean the negation of a $pp$-formula which, note is a CL-formula. Given a $pp^{*}$-formula $\psi({\bar x})$ as in Definition 3.2 where any $c_{i}$, $d_{j}$ which appear are $0$, then 
$\psi({\bar x})$ also defines a subgroup of $A^{n}$, which we may call a $pp^{*}$-subgroup. Of course a $pp$-subgroup is a special case of a $pp^{*}$-subgroup. As in the proof of 2.12 (ii) the image of a $pp^{*}$ subgroup of $A^{n}$ is a {\em closed} subgroup of ${\mathbb T}^{n}$.  

We often identify formulas with the sets the define. We will sometimes let $K$ denote $Ker(f)$, a subgroup of $A$. 
The notions $pp^{*}$-type and $pp$-type are the obvious ones (as with $\Phi$-type in Section 2).

Our main theorem, ``quantifier elimination down to $pp^{*}$-formulas" is:
\begin{Theorem} Suppose $\bar a$ and $\bar b$ are finite tuples from $M$ (namely from the sort $A$) such that  $pp^{*}tp({\bar a}) = pp^{*}tp({\bar b})$ then $tp({\bar a}) = tp({\bar b})$.
\end{Theorem}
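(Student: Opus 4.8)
The plan is to prove the statement by a back-and-forth argument, reducing everything to a single extension lemma about $pp^*$-types. First I would set up the induction: by induction on the structure of CL-formulas (Definition 2.3) I would show that $pp^*tp(\bar a)=pp^*tp(\bar b)$ implies $M\models\phi(\bar a)\Leftrightarrow M\models\phi(\bar b)$ for every CL-formula $\phi$. For the base clause (i), any FOL $L^-$-formula is handled at once: since $pp$-formulas are the special case of $pp^*$-formulas with empty index sets, equality of $pp^*$-types gives equality of $pp$-types, and classical $pp$-elimination for abelian structures (see \cite{Gute}) says that in a fixed model of a complete theory the full FOL $L^-$-type is a Boolean combination of $pp$-formulas; hence $\bar a$ and $\bar b$ satisfy exactly the same $L^-$-formulas. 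For the base clause (ii), a formula $D(t_1,\ldots,t_n)$ with $Q$-valued terms $t_i=f(s_i(\bar x))$ depends only on the tuple $(f(s_1(\bar a)),\ldots,f(s_n(\bar a)))$; since each $s_i$ is a $\Z$-linear combination of the variables and $f$ is a homomorphism, this tuple equals $(\sum_j m_{1j}f(a_j),\ldots)$ and is therefore determined by the values $f(a_j)$, which are recorded in $pp^*tp(\bar a)$ through the formulas $f(x_j)=c$. The Boolean clause (iii) is immediate, so the whole weight falls on the quantifier clause (iv).

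The quantifier step rests on the following extension lemma, which is the real content: if $pp^*tp(\bar a)=pp^*tp(\bar b)$ and $c\in A$, then there is $c'\in A$ with $pp^*tp(\bar a c)=pp^*tp(\bar b c')$. Granting this, the $\exists$-case is standard, a witness $c$ for $\bar a$ is matched by a $c'$ for $\bar b$ and the induction hypothesis applied to the lower-complexity $\phi(\bar x,z)$ on the tuples $\bar a c$, $\bar b c'$ produces the witness; the $\forall$-case follows from the symmetric (going back) direction, matching an arbitrary $c'$ over $\bar b$ by some $c$ over $\bar a$, as in the treatment of quantifiers in Proposition 2.6. To build $c'$ I would realize over $\bar b$, using $\kappa$-saturation, the partial type consisting of the positive part $\{\chi(\bar b,z):\chi \text{ a } pp^*\text{-formula},\ M\models\chi(\bar a,c)\}$ together with the negated part $\{\neg\psi(\bar b,z):\psi \text{ a } pp\text{-formula},\ M\not\models\psi(\bar a,c)\}$ (the latter being $L^-$-formulas, hence legitimate CL-formulas). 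The positive part alone is finitely satisfiable for free: a finite conjunction of positive $pp^*$-conditions is again $pp^*$, prefixing $\exists z$ keeps it $pp^*$, and it holds of $\bar a$ witnessed by $c$, hence of $\bar b$ by hypothesis. Moreover, realizing the positive part already forces exactness on the \emph{continuous} side: for a $pp^*$-formula $\chi=\exists\bar y(\phi\wedge\bigwedge_j f(y_j)=d_j)$ the achievable witness-values $\{f(\bar y):\phi(\bar a,c,\bar y)\}$ form a coset of the closed subgroup $f(W)\le\mathbb T^{|\bar y|}$, where $W=\{\bar y:\phi(\bar 0,\bar 0,\bar y)\}$, and the positive $pp^*$-type records this coset completely through the formulas $\exists\bar y(\phi\wedge f(\bar y)=d)$; since the corresponding set over $\bar b$ is a coset of the \emph{same} $f(W)$ containing it, the two cosets coincide, so no extra $f$-existential $pp^*$-formula can become true of $(\bar b,c')$.

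The main obstacle is the consistency of the positive part \emph{with} the negated $pp$-conditions, that is, the finite satisfiability of the combined type over $\bar b$. The difficulty is that the relevant sentence $\exists z(\bigwedge_i\chi_i(\bar x,z)\wedge\bigwedge_k\neg\psi_k(\bar x,z))$ is \emph{not} a $pp^*$-formula, so equality of $pp^*$-types does not transfer it from $\bar a$ to $\bar b$ directly. I would attack this with a version of B.\,H.~Neumann's coset lemma adapted to the homomorphism $f$: the positive conditions together with the value constraint $f(z)=f(c)$ cut out over $\bar b$ a nonempty coset $V$ of a $pp^*$-subgroup, and one must show that $V$ is not covered by the finitely many bad cosets $e'_k+H_k$ arising from the $\neg\psi_k$. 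By Neumann's lemma any such covering reduces to those $k$ with $[G'':G''\cap H_k]<\infty$, and the point will be that this finite-index covering data is itself expressible by $pp^*$-formulas (the $pp^*$ analogue of the Baur--Monk invariants), so the shared $pp^*$-type of $\bar a$ and $\bar b$ forces the same covering over $\bar a$, contradicting that $c$ avoids every bad coset. The genuinely new ingredient, and the place where I expect the work to concentrate, is checking that the closed value constraints, which by compactness of $\mathbb T$ only cut $V$ down to a coset of a subgroup of $\ker f$-type whose image in $\mathbb T$ is still closed, leave enough room to avoid the infinite-index bad cosets, so that the $f$-data and the classical coset combinatorics do not interfere.
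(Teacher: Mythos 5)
Your proposal follows essentially the same route as the paper: your back-and-forth reduction is the paper's Lemma 3.5 with the same realization target (positive $pp^{*}$-part plus negated $pp$-part), your observation that realizing this forces full $pp^{*}$-type equality via cosets of the closed image subgroup in ${\mathbb T}^{n}$ is exactly the paper's Lemma 3.4, and your consistency argument via Neumann's lemma plus $pp^{*}$-expressible inclusion-exclusion invariants is precisely the paper's Fact 3.6 together with its Subclaim. The residual worry you flag at the end dissolves just as you anticipate: after the Neumann reduction to the finite-index $H_{i}$, covering is governed by the parameter-free indices together with the nonemptiness pattern of the intersections $X\cap\bigcap_{i\in\Delta}X_{i}$, and each such nonemptiness is itself expressed by a single $pp^{*}$-formula at the parameter tuple (the constraints $f(y)=s$ and ${\bar f}({\bar z})={\bar t}$ are absorbed into the $pp^{*}$-formula after pulling the existential quantifiers of the $\psi_{i}$ out front), so it transfers between ${\bar a}$ and ${\bar b}$ directly from the hypothesis.
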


Here $tp$ refers to CL-type in the structure $M$. 

Theorem 3.3 will be proved by a standard back-and-forth argument.  We will need the following lemma, the proof of which is a simple version of the proof of the Theorem.

\begin{Lemma}  Let ${\bar a}$, ${\bar b}$ be tuples of the same length in $M$.  Suppose that $pp^{*}tp({\bar a}) \subseteq pp^{*}tp({\bar b})$ and 
$pp^{-}tp({\bar a}) \subseteq pp^{-}tp({\bar b})$. Then $pp^{*}tp({\bar a}) = pp^{*}tp({\bar b})$.
\end{Lemma}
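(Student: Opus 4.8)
The plan is to prove the nontrivial inclusion $pp^{*}tp({\bar b}) \subseteq pp^{*}tp({\bar a})$, since the hypotheses already give the reverse. First I would record what the two hypotheses buy us. Restricting the first inclusion to genuine $pp$-formulas (the case $I = J = \emptyset$) gives $pp\,tp({\bar a}) \subseteq pp\,tp({\bar b})$, while the hypothesis on negated $pp$-types is exactly $pp\,tp({\bar b}) \subseteq pp\,tp({\bar a})$; together these yield $pp\,tp({\bar a}) = pp\,tp({\bar b})$. I would also note that corresponding $f$-values agree: if $f(a_{i}) = e$ then $f(x_{i}) = e$ is a $pp^{*}$-formula in $pp^{*}tp({\bar a})$, hence in $pp^{*}tp({\bar b})$, so $f(b_{i}) = e$; thus $f(a_{i}) = f(b_{i})$ for every coordinate $i$. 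This disposes of all conditions of the form $f(x_{i}) = c_{i}$ imposed on the free variables.

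Now fix a $pp^{*}$-formula $\psi({\bar x}) = \exists {\bar y}(\phi({\bar x}, {\bar y}) \wedge \bigwedge_{i \in I} f(x_{i}) = c_{i} \wedge \bigwedge_{j \in J} f(y_{j}) = d_{j})$ with ${\bar b} \models \psi$, witnessed by some ${\bar b}'$; I want ${\bar a} \models \psi$. Write $G \le A^{n+k}$ for the subgroup defined by the conjunction $\phi$ of atomic $L^{-}$-formulas, and $H = \{{\bar y} : ({\bar 0}, {\bar y}) \in G\}$, so that for any ${\bar c}$ with $\exists {\bar y}\,\phi({\bar c}, {\bar y})$ the set of witnesses is a coset of $H$. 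Since ${\bar b} \models \exists {\bar y}\,\phi({\bar x}, {\bar y})$ and this is a $pp$-formula, equality of $pp$-types provides ${\bar a}'$ with $\phi({\bar a}, {\bar a}')$, so the witnesses for ${\bar a}$ form the coset ${\bar a}' + H$.

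The heart of the argument is to show that ${\bar a}' + H$ contains an element whose $f$-values on the coordinates $j \in J$ are exactly $d_{j}$. Let $e_{j} = f(a'_{j})$ for $j \in J$ and consider the auxiliary $pp^{*}$-formula $\Xi({\bar x}) = \exists {\bar y}(\phi({\bar x}, {\bar y}) \wedge \bigwedge_{j \in J} f(y_{j}) = e_{j})$. By construction ${\bar a} \models \Xi$ (witnessed by ${\bar a}'$), so by the first hypothesis ${\bar b} \models \Xi$, witnessed by some ${\bar b}''$ with $f(b''_{j}) = e_{j}$. Both ${\bar b}'$ and ${\bar b}''$ satisfy $\phi({\bar b}, -)$, hence ${\bar b}' - {\bar b}'' \in H$; applying $f$ coordinatewise on $J$ and using that $f$ is a homomorphism gives $(d_{j} - e_{j})_{j \in J} \in f_{J}(H)$, where $f_{J} \colon A^{k} \to {\mathbb T}^{J}$ sends ${\bar y}$ to $(f(y_{j}))_{j \in J}$. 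Choosing ${\bar h} \in H$ with $f_{J}({\bar h}) = (d_{j} - e_{j})_{j}$, the element ${\bar a}^{*} = {\bar a}' + {\bar h}$ lies in the witness-coset ${\bar a}' + H$ and satisfies $f(a^{*}_{j}) = e_{j} + (d_{j} - e_{j}) = d_{j}$. Together with $f(a_{i}) = c_{i}$ from the first paragraph, ${\bar a}^{*}$ witnesses ${\bar a} \models \psi$, as desired.

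The main obstacle is exactly this middle step: the inclusion of $pp^{*}$-types transfers formulas only from ${\bar a}$ to ${\bar b}$, the ``wrong'' direction for the formula $\psi$ we are handed. The device of reading off the actual $f$-values $e_{j}$ of a chosen $pp$-witness for ${\bar a}$, packaging them into the auxiliary formula $\Xi$, and transferring $\Xi$ in the available direction is what lets us compare the $f$-values of witnesses across ${\bar a}$ and ${\bar b}$ inside the common coset structure governed by $H$. Everything else is routine group arithmetic, and --- in contrast to the surjectivity and closed-image statements used elsewhere --- no appeal to the topology of ${\mathbb T}$ is needed here, since each $pp^{*}$-formula imposes only finitely many exact $f$-value conditions.
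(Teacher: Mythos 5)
Your proof is correct and is essentially the paper's own argument: your auxiliary formula $\Xi$, built from the actual $f$-values $e_{j} = f(a'_{j})$ of a chosen $pp$-witness for ${\bar a}$ and transferred in the available direction, is precisely the paper's step of picking ${\bar s}'$ in ${\bar f}(\phi({\bar a},{\bar y}))$ and transferring $\exists {\bar y}(\phi({\bar x},{\bar y}) \wedge {\bar f}({\bar y}) = {\bar s}')$ to ${\bar b}$. The only cosmetic differences are that you carry out the final coset comparison upstairs in $A$ (via ${\bar h} = {\bar b}' - {\bar b}'' \in H$) where the paper compares the image cosets $C$ and $D$ of ${\bar f}(H)$ in ${\mathbb T}^{m}$ and concludes $C = D$, and that you handle partial $f$-value specifications directly rather than reducing to the fully specified case; your observation that no topological input (closedness of ${\bar f}(H)$) is needed here is also accurate.
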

\begin{proof}  Given the hypotheses we have to prove that $pp^{*}tp({\bar b}) \subseteq pp^{*}tp({\bar a})$.
Let $\psi({\bar x})$ be a $pp^{*}$-formula:  $\exists {\bar y}(\phi({\bar x}, {\bar y})\wedge {\bar f}({\bar x}) = {\bar r} \wedge  {\bar f}({\bar y}) = {\bar s})$, where notation has the obvious interpretation (in particular we are specifying the value under $f$ of all variables). 
And suppose that $M\models \psi({\bar b})$. First note that the value of ${\bar a}$ under ${\bar f}$ is part of $pp^{*}tp({\bar a})$, so holds of ${\bar b}$ too, whereby  
\newline
(*)    ${\bar f}({\bar a}) = {\bar r}$. 

\vspace{2mm}
\noindent
 Now  $\exists{\bar y}(\phi({\bar x},{\bar y}))$ is a $pp$-formula true of ${\bar b}$. Our assumptions imply that this $pp$-formula is true of ${\bar a}$.  So the $pp$-formula {\em with 
parameters}  $\phi({\bar a}, {\bar y})$ is consistent, hence defines a coset  (in suitable $A^{m}$) of the subgroup of $A^{m}$ defined by $\phi({\bar 0},{\bar y})$.  By remarks above
${\bar f}(\phi({\bar 0}, {\bar y}))$ is a closed subgroup $H$ of ${\mathbb T}^{m}$, and  ${\bar f}(\phi({\bar a}, {\bar y}))$ defines a coset (translate) $C$ of $H$ in ${\mathbb T}^{m}$. 
But $\phi({\bar b}, {\bar y})$ is nonempty, so also ${\bar f}(\phi({\bar b}, {\bar y}))$ is a coset $D$ say of $H$ in ${\mathbb T}^{m}$. 
Pick any  ${\bar s}'$ in $C$. So $M \models \exists {\bar y}(\phi({\bar a}, {\bar y}) \wedge {\bar f}({\bar y}) = {\bar s}')$.   By our assumptions
$M \models \exists {\bar y}(\phi({\bar b}, {\bar y}) \wedge {\bar f}({\bar y}) = {\bar s}')$ too.  This means that $C\cap D \neq \emptyset$, hence $C = D$, as both are cosets of the same 
subgroup $H$ of ${\mathbb T}^{m}$.  Hence, as  ${\bar s}\in D$, ${\bar s}\in C$ too, hence $M\models \exists {\bar y}(\phi({\bar a}, {\bar y})\wedge {\bar f}({\bar y}) = {\bar s})$. Together 
with (*) this shows that $M\models \psi({\bar a})$, which completes the proof of the lemma. 

\end{proof}

\vspace{5mm}
\noindent
Now we can give:
\newline
{\em Proof of Theorem 3.3.}
\newline
This will be closely related to a standard proof of $pp$ elimination in modules (see \cite{Ziegler}, \cite{Gute}) and recovers that result. 

As the $pp^{*}$-type of a tuple contains the quantifier-free type of that tuple, it will be enough to prove the following ``back-and-forth" statement:

\begin{Lemma}  Suppose that ${\bar a}$ and ${\bar b}$ are finite tuples from $M$ (or rather $A$) of the same length and with the same $pp^{*}$-type. Let $c\in A$. Then there is $d\in A$ such that $({\bar a}, c)$ and $({\bar b},d)$ have the same $pp^{*}$-type. 
\end{Lemma}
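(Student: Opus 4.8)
The plan is to reduce to Lemma 3.4 and then run a Baur--Monk style covering argument carried out with $pp^{*}$-invariants. Since the back-and-forth in the proof of Theorem 3.3 is maintaining the invariant $pp^{*}tp(\bar a)=pp^{*}tp(\bar b)$, I may assume this. By Lemma 3.4 applied to the pairs $(\bar a,c)$ and $(\bar b,d)$, it is enough to produce $d\in A$ with $pp^{*}tp(\bar a,c)\subseteq pp^{*}tp(\bar b,d)$ and $pp^{-}tp(\bar a,c)\subseteq pp^{-}tp(\bar b,d)$, since Lemma 3.4 then upgrades this to full equality of $pp^{*}$-types. Equivalently, letting $\Gamma(y)$ consist of the formulas $\psi(\bar b,y)$ for $\psi$ a $pp^{*}$-formula with $M\models\psi(\bar a,c)$, together with $\neg\chi(\bar b,y)$ for $\chi$ a $pp$-formula with $M\models\neg\chi(\bar a,c)$, I must realize $\Gamma(y)$ in $M$. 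By saturation of $M$ it suffices to show that $\Gamma$ is finitely satisfiable, and since a conjunction of $pp^{*}$-formulas is again $pp^{*}$, a finite subset of $\Gamma$ is implied by one formula $\psi(\bar b,y)\wedge\bigwedge_{i=1}^{k}\neg\chi_{i}(\bar b,y)$ with $\psi$ a $pp^{*}$-formula true of $(\bar a,c)$ and each $\chi_{i}$ a $pp$-formula false of $(\bar a,c)$.

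Next I would set up the coset picture. Using the remarks preceding the lemma, the solution set $\psi(\bar a,A):=\{y: M\models\psi(\bar a,y)\}$ is a coset $c+G^{*}$ of the $pp^{*}$-subgroup $G^{*}=\{y: M\models\psi_{0}(\bar 0,y)\}$, where $\psi_{0}$ is $\psi$ with its ${\mathbb T}$-parameters set to $0$; and since $\exists y\,\psi(\bar x,y)$ is a $pp^{*}$-formula holding of $\bar a$, it holds of $\bar b$, so $\psi(\bar b,A)$ is a nonempty coset $d_{0}+G^{*}$ of the same subgroup $G^{*}$. Likewise each of $\chi_{i}(\bar a,A)$ and $\chi_{i}(\bar b,A)$ is empty or a coset of the fixed $pp$-subgroup $H_{i}$. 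The element $c$ witnesses $c+G^{*}\not\subseteq\bigcup_{i}\chi_{i}(\bar a,A)$, and what I want is some $d\in d_{0}+G^{*}$ with $d\notin\chi_{i}(\bar b,A)$ for all $i$, i.e. $d_{0}+G^{*}\not\subseteq\bigcup_{i}\chi_{i}(\bar b,A)$.

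The heart of the argument, and the main obstacle, is transferring this non-covering from the $\bar a$-side to the $\bar b$-side; this is exactly the place where the classical module proof invokes the invariants of Baur--Monk. The key observation is that all the relevant combinatorial data are $pp^{*}$-invariants. On the one hand, whether $\bigcap_{i\in S}\chi_{i}(\bar b,A)\cap(d_{0}+G^{*})$ is nonempty, for $S\subseteq\{1,\dots,k\}$, is governed by the $pp^{*}$-formula $\exists y(\psi\wedge\bigwedge_{i\in S}\chi_{i})$ and so agrees over $\bar a$ and over $\bar b$. On the other hand, the groups $G^{*}$, $H_{i}$ and their intersections are defined parameter-freely, hence are literally the same subgroups of $A$ on both sides, so all indices between them coincide. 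Suppose toward a contradiction that $d_{0}+G^{*}\subseteq\bigcup_{i}\chi_{i}(\bar b,A)$. By B.\,H.\ Neumann's lemma I may discard the cosets whose subgroup $G^{*}\cap H_{i}$ has infinite index in $G^{*}$, reducing to a finite-index situation; with $N=\bigcap_{i}(G^{*}\cap H_{i})$ over the surviving indices, $N$ has finite index in $G^{*}$. I would then count by inclusion--exclusion the number of $N$-cosets of $d_{0}+G^{*}$ covered by the $\chi_{i}(\bar b,A)$: each term is a product of an index $[\bigcap_{i\in S}(G^{*}\cap H_{i}):N]$ and a nonemptiness indicator, both of which are $pp^{*}$-invariant, so the total count equals the corresponding count on the $\bar a$-side. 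Covering on the $\bar b$-side forces this count to equal $[G^{*}:N]$, whence the $\bar a$-side is covered too, contradicting that $c$ is uncovered. Therefore $d_{0}+G^{*}\not\subseteq\bigcup_{i}\chi_{i}(\bar b,A)$, a suitable $d$ exists, $\Gamma$ is finitely satisfiable, and saturation of $M$ completes the proof. Note that the compact group ${\mathbb T}$ enters only through the facts that $pp^{*}$-formulas define cosets of $pp^{*}$-subgroups and that $M$ is saturated; the covering combinatorics takes place entirely inside $A$.
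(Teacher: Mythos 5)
Your proof is correct, and its skeleton is the paper's own: reduce via Lemma 3.4 and saturation to realizing a single condition $\psi(\bar b,y)\wedge\bigwedge_{i}\neg\chi_{i}(\bar b,y)$, then transfer non-covering of cosets from the $\bar a$-side to the $\bar b$-side by Neumann's lemma together with inclusion--exclusion, where each term is an index (parameter-free, hence literally the same on both sides) times a nonemptiness indicator (your bookkeeping is exactly the paper's Fact 3.6 and its condition (**)). You were also right to flag and supply the hypothesis $pp^{*}tp(\bar a)=pp^{*}tp(\bar b)$, which the printed statement omits but which the back-and-forth for Theorem 3.3 provides. The one genuine divergence is at the transfer of nonemptiness. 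You invoke \emph{full equality} of $pp^{*}$-types, so that failure as well as truth of the $pp^{*}$-formula $\exists y(\psi\wedge\bigwedge_{i\in S}\chi_{i})$ passes between $\bar a$ and $\bar b$ in one line. The paper instead proves this step (its ``Subclaim'') from only the one-sided hypotheses of Lemma 3.4, namely $pp^{*}tp(\bar a)\subseteq pp^{*}tp(\bar b)$ and $pp^{-}tp(\bar a)\subseteq pp^{-}tp(\bar b)$; for the direction ``nonemptiness fails over $\bar a$ implies it fails over $\bar b$'' it then needs a two-case argument whose harder case runs inside ${\mathbb T}$: pick $(s',\bar t')$ in the image coset ${\bar f}(Z)$, use the $pp^{*}$-inclusion to conclude ${\bar f}(Z)={\bar f}(Z')$ as cosets of the closed subgroup ${\bar f}(S)$ of ${\mathbb T}^{1+k}$, and deduce $(s,\bar t)\notin{\bar f}(Z')$. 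Your route buys brevity and, as you observe, confines the covering combinatorics to $A$ (the compact group entering only through Lemma 3.4, which you use as a black box and whose proof is where the closed-coset argument is in any case unavoidable); the paper's route buys a subclaim valid under the weaker asymmetric hypotheses, in parallel with Lemma 3.4. Since the lemma is only ever applied with equality of $pp^{*}$-types in hand, your simplification is legitimate.
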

\begin{proof} Let  $q({\bar x}, y) = pp^{*}tp({\bar a}, c)$.  and let $q_{0} = pptp({\bar a}, c)$.
If we can find $d$ satisfying $q({\bar b},y) \cup \{\neg\psi({\bar b},y):\psi({\bar x},y)\notin q_{0}\}$, then by Lemma 3.4 we will have that $pp^{*}tp({\bar a},c) = pp^{*}tp({\bar b},d)$. 

\vspace{2mm}
\noindent
So we will prove:
\newline 
{\em CLAIM.}   $q({\bar b},y) \cup \{\neg\psi({\bar b},y):\psi({\bar x},y)\notin q_{0}\}$  is consistent (i.e. realized in $M$). 
\newline
{\em Proof of CLAIM.}
By compactness it is enough to show that any finite subset of the above set of CL-formulas is consistent. As $q$ is closed under finite conjunctions we must show that for any $pp^{*}$-formula  $\chi({\bar x},y)$ in $q$ and  $pp$-formulas $\psi_{1}({\bar x},y)$,..,$\psi_{n}({\bar x}, y)$, not in $q_{0}$, that:

\vspace{2mm}
\noindent
(*) $ \chi({\bar b},y) \wedge \neg\psi_{1}({\bar b},y) \wedge ... \wedge \neg\psi_{n}({\bar b},y)$ has a solution in $M$. 

Identifying formulas with their solutions in $M$ this means we have to show that $\chi({\bar b},y)$ is not covered by the union of the $\psi_{i}({\bar b},y)$  ($i=1,..,n$).

Let us write explicitly  $\chi({\bar x},y)$ as  $\exists {\bar z}(\phi({\bar x},y,{\bar z}) \wedge ({\bar f}({\bar x}) = {\bar r}) \wedge (f(y) = s) \wedge  ({\bar f}({\bar z}) = {\bar t}))$.  (Note we can assume that all values of the variables $x_{i}, y, z_{i}$ under $f$ are specified in $\chi$.)

Then $\phi({\bar b},y)$ defines a coset $X$ of the subgroup $H$ of $K$ defined by 
\newline
$\exists {\bar z}(\phi({\bar 0},y,{\bar z}) \wedge (f(y) = 0) \wedge  ({\bar f}({\bar z}) = {\bar 0}))$. On  the other hand $\psi_{i}({\bar b},y)$ defines
a coset of the subgroup $H_{i}$ of $A$ defined by $\psi_{i}({\bar 0},y)$. 

Now the Neumann lemma (see Lemma 1.4 of \cite{Ziegler}) says that $X$ is covered by the $X_{i}$'s iff $X$ is covered by those $X_{i}$'s such that $H\cap H_{i}$ has finite index in $H$.   So we may assume that for each $i$, $H\cap H_{i}$ has finite index in $H$. 

On the other hand we have the following elementary inclusion-exclusion principle (Fact 3.6 below). 
\newline
Let us denote  $H\cap \bigcap_{i=1,..,n}H_{i}$ by $K_{0}$, a finite index subgroup of $H$.  So for each $\Delta\subseteq 
\{1,..,n\}$, $\bigcap_{i\in \Delta}X\cap X_{i}$ is a finite (disjoint) union of cosets of $K_{0}$ and by $|\bigcap_{i\in 
\Delta}X\cap X_{i}|$ we will temporarily mean the number of these cosets (namely the ``index"  of $K_{0}$ in $
\bigcap_{i\in {\Delta}} X\cap X_{i}$).  We can use the same notation for $H$ and $H_{i}$ in place of the  $X$ and $X_{i}
$.   And let us note for now that  

\vspace{2mm}
\noindent
(**) $|\bigcap_{i\in \Delta}X\cap X_{i}| = |\bigcap_{i\in \Delta}H\cap H_{i}|$ if and only if 
$\bigcap_{i\in \Delta}X\cap X_{i} \neq \emptyset$. 

Then we have:
\begin{Fact}  $X$ is contained in $\bigcup_{i\in \Delta}X_{i}$ iff 
$\sum_{\Delta\subseteq \{1,..,n\}} (-1)^{|\Delta|} |X\cap \bigcap_{i\in \Delta}X_{i}| = 0$
\end{Fact}

Now replacing ${\bar b}$ by ${\bar a}$ we obtain $\chi({\bar a}, y)$ (which we call $Y$) and $\psi_{i}({\bar a}, y)$ which we call $Y_{i}$, cosets of $H$ and $H_{i}$ respectively. Our assumptions apply to give that $Y$ is NOT covered by the union of the $Y_{i}$. On the other hand Fact 3.6 also applies to the $Y$ and $Y_{i}$. Now (*) says exactly that $X$ is not covered by the $X_{i}$. Hence to prove this, it is enough, using (**) to prove:

\vspace{2mm}
\noindent
{\em Subclaim.}  For each $\Delta\subseteq \{1,..,n\}$,  $X\cap\bigcap_{i\in \Delta} \neq \emptyset$ iff $Y\cap \bigcap_{i\in \Delta}Y_{i} \neq \emptyset$. 
\newline
{\em Proof of subclaim.}  The right hand side is expressed by a $pp^{*}$-formula true of ${\bar a}$, so true of ${\bar b}$ by  our hypotheses, yielding the left hand side.

Conversely, suppose the right hand side fails. 
\newline
{\em Case (i).} $M\models \neg (\exists y) (\exists {\bar z})(\phi({\bar a}, y, {\bar z}) \wedge \bigwedge_{i\in \Delta}\psi({\bar a}, y))$. 
\newline
Then as this means that a certain negated $pp$-formula is true of ${\bar a}$, by our hypotheses it is also true of ${\bar b}$, so a fortiori, the left hand side of the Subclaim fails.
\newline
{\em Case (ii).} $M\models (\exists y) (\exists {\bar z})(\phi({\bar a}, y, {\bar z}) \wedge \bigwedge_{i\in \Delta}\psi({\bar a}, y))$. 
\newline
Then the set of $(y,{\bar z})\in A^{1+k}$ such that $\phi({\bar a}, y, {\bar z}) \wedge \bigwedge_{i\in \Delta}\psi({\bar a}, y)$  holds in ${\bar M}$, is a coset (translate) $Z$ of the subgroup $S$ of $A^{1+k}$ defined 
by $\phi({\bar 0},y,{\bar z}) \wedge \bigwedge_{i\in \Delta}\psi_{i}({\bar 0},y)$.  Then ${\bar f}(Z)\subseteq {\mathbb T}
^{1+k}$ is a coset of the (closed) subgroup ${\bar f}(S)$ of ${\mathbb T}^{1+k}$. 
Now our assumption that the right hand side in the subclaim fails means that  $(s,{\bar t})$ (from the formula $\chi$) is not in ${\bar f}(Z)$.  Let $(s',{\bar t}')$ be any point in ${\bar f}(Z)$. Then the $pp^{*}$- formula   $\exists y \exists {\bar z}(\phi({\bar x},y,{\bar z})\wedge \bigwedge_{i\in \Delta}\psi_{i}({\bar x},y) \wedge f(y) = s' \wedge {\bar f}({\bar z}) = {\bar t}')$ is true of ${\bar a}$, so by assumption true of ${\bar b}$. But  the set of $(y,{\bar z})\in A^{1+k}$ such that $\phi({\bar b}, y, {\bar z}) \wedge \bigwedge_{i\in \Delta}\psi({\bar b}, y)$ holds,  is  a coset $Z'$ of $S$, whereby  ${\bar f}(Z')$ is also a coset of the subgroup ${\bar f}(S)$. We have just seen that  $(r',{\bar t}')\in {\bar f}(Z)\cap {\bar f}(Z')$. Hence  ${\bar f}(Z) = {\bar f}(Z')$, and so $(r,{\bar t})\notin {\bar f}(Z')$ which proves that the left hand side of the statement of the subclaim fails. 

\vspace{2mm}
\noindent
This proves the subclaim, the claim, as well as the lemma.
\end{proof}

As remarked above, this back and forth property for equality of $pp^{*}$-types suffices, by a routine argument to prove  Theorem 3.3

\qed (of proof of Theorem 3.3)

Let $\Phi$ be the collection of CL-formulas obtained from the $pp^{*}$ formulas and negated $pp$-formulas by closing under $\wedge$, $\vee$, and approximations.

\begin{Remark}  From Lemma 2.15 we obtain for $T$: for any formula $\phi({\bar x})$ and approximation to it $\phi'({\bar 
x})$ there is a formula $\theta({\bar x})\in \Phi$ such that in a saturated model, $\phi({\bar x})$ implies $\theta({\bar x})$ implies $\theta'({\bar x})$ (and likewise with approximate satisfaction in arbitrary models of $T$). 
\end{Remark}

\begin{Corollary}  $T$ is stable.

\end{Corollary}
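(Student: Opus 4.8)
The plan is to derive stability directly from the $pp^{*}$-elimination of Theorem 3.3 together with the coset structure of $pp^{*}$-definable sets. Working in the saturated model $M$, let $((\bar a_{i}, \bar b_{i}): i<\omega)$ be a CL-indiscernible sequence and fix $i<j$; by Definition 2.14 I must show $tp(\bar a_{i}, \bar b_{j}) = tp(\bar a_{j}, \bar b_{i})$. By Theorem 3.3 it suffices to show that these two tuples have the same $pp^{*}$-type, and since the $pp^{*}$-type is simply the set of satisfied $pp^{*}$-formulas, it is enough to prove, for each $pp^{*}$-formula $\psi(\bar x, \bar y)$ with $|\bar x| = |\bar a_{i}|$ and $|\bar y| = |\bar b_{i}|$, that
$$ M\models \psi(\bar a_{i}, \bar b_{j}) \quad\text{if and only if}\quad M\models \psi(\bar a_{j}, \bar b_{i}). $$

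First I would record the geometric input. As noted after Definition 3.2, the solution set $C = \psi(M) \subseteq A^{m+n}$ (with $m=|\bar x|$, $n=|\bar y|$) is a coset of the $pp^{*}$-subgroup obtained by setting all $\mathbb{T}$-constants of $\psi$ to $0$; crucially, the existentially quantified $f$-conditions do not disturb this, because $f$ is a homomorphism. Hence $C$ is closed under the ternary affine operation $(u,v,w)\mapsto u-v+w$. Since $\psi$ carries no parameters from $A$ --- its only constants come from $\mathbb{T}$, which are built into the language $L$ --- membership of $(\bar a_{p}, \bar b_{q})$ in $C$ depends only on the CL-type of $(\bar a_{p}, \bar b_{q})$, and so, by indiscernibility, only on whether $p<q$, $p=q$, or $p>q$. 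Writing $T_{<}, T_{=}, T_{>}$ for these three truth values, the displayed equivalence is precisely $T_{<} = T_{>}$.

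The equality $T_{<} = T_{>}$ is the classical ``subgroups have no order property'' computation, carried out for the coset $C$. Pick indices $i<j<k<l$ from the sequence. Closure of $C$ under $u-v+w$ together with the identity
$$ (\bar a_{i}, \bar b_{j}) - (\bar a_{i}, \bar b_{l}) + (\bar a_{k}, \bar b_{l}) = (\bar a_{k}, \bar b_{j}) $$
shows that if the three terms on the left lie in $C$ (all of order type $<$) then $(\bar a_{k}, \bar b_{j})\in C$ (order type $>$), giving $T_{<} \Rightarrow T_{>}$. Symmetrically,
$$ (\bar a_{l}, \bar b_{k}) - (\bar a_{l}, \bar b_{i}) + (\bar a_{j}, \bar b_{i}) = (\bar a_{j}, \bar b_{k}) $$
gives $T_{>} \Rightarrow T_{<}$ (the three left-hand terms are of order type $>$, while the right-hand term is of order type $<$). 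Hence $T_{<}=T_{>}$, which yields the displayed equivalence, the equality of the two $pp^{*}$-types, and finally, via Theorem 3.3, the equality $tp(\bar a_{i}, \bar b_{j}) = tp(\bar a_{j}, \bar b_{i})$. This recovers, in the present CL setting, the standard proof of stability of modules and abelian structures.

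I expect the argument to be essentially routine once the reductions are in place; the only point requiring care --- and the sole genuinely new ingredient beyond the classical module argument --- is verifying that the solution set of a $pp^{*}$-formula remains a coset (so that the affine closure under $u-v+w$ holds) despite the presence of the existentially quantified $f$-value conditions, together with checking that CL-indiscernibility really does collapse membership in $C$ to the three order cases. Both facts are immediate, from $f$ being a homomorphism and from $pp^{*}$-formulas being parameter-free over $A$, but they are where the continuous-logic content of the statement actually resides. Alternatively, one could reach the same conclusion by combining the symmetry of ordinary $pp$-formulas with Lemma 3.4, but the direct coset computation above seems cleanest.
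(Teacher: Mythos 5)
Your proof is correct and takes essentially the same route as the paper: reduce via Theorem 3.3 to equality of $pp^{*}$-types, use CL-indiscernibility (and the absence of $A$-parameters in $pp^{*}$-formulas) to make satisfaction depend only on the order type of the index pair, and then exploit the coset structure of $pp^{*}$-definable sets to force $T_{<}=T_{>}$. The only difference is in the bookkeeping of that last step --- the paper argues fiberwise, noting that $\phi^{*}(\bar x,\bar b_{i})$ and $\phi^{*}(\bar x,\bar b_{j})$ are equal-or-disjoint cosets of one fixed group and juggling auxiliary indices $k<j<j'$, whereas you use closure of the full solution set under $(u,v,w)\mapsto u-v+w$ with explicit affine identities --- a cosmetic variant which, if anything, is slightly cleaner, since closure under the Mal'cev operation holds vacuously and so also disposes of the inconsistent (empty) case that the paper handles with its ``if consistent'' proviso.
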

\begin{proof}  This is very standard, but we do it anyway because of the somewhat different set-up. Working in the saturated model, let $(({\bar a}_{i},{\bar b}_{i}): i < \omega)$ be an indiscernible sequence of pairs of finite tuples (from $A$).  We have to show that if $i< j$ then $({\bar a}_{i}, {\bar b}_{j})$ and $({\bar a}_{j},{\bar b}_{i})$ have the same type

By Theorem 3.3 we have to show that they satisfy the same $pp^{*}$-formulas. Let $\phi^{*}({\bar x}, {\bar y})$ be a $pp^{*}$-formula  
$\exists {\bar z}(\phi({\bar x}, {\bar y}, {\bar z}) \wedge ({\bar f}({\bar x}) = {\bar r}) \wedge ({\bar f}({\bar y}) = {\bar s}) \wedge ({\bar f}({\bar z}) = {\bar t}))$.  Then 
for any ${\bar b}$, $\phi^{*}({\bar x}, {\bar b})$, if consistent, defines a coset of the group defined by
$\exists{\bar z}(\phi({\bar x}, {\bar 0}, {\bar z}) \wedge {\bar f}({\bar x}) = {\bar 0} \wedge {\bar f}({\bar z}) = {\bar 0})$.   And two such cosets are equal or disjoint.
Let us assume that  $M\models \phi^{*}({\bar a}_{j}, {\bar b}_{i})$ and show $M\models \phi^{*}({\bar a}_{i}, {\bar b}_{j})$
We may assume that there is $k$ with $i < k < j$.
Let $j' > j$.   By indiscernibility $M \models \phi^{*}({\bar a}_{j'}, {\bar b}_{j})$. So $\phi^{*}({\bar x}, {\bar b}_{i})$ and $\phi^{*}({\bar x}, {\bar b}_{j})$ are equivalent (define the same set).   We also have, by indiscernibility, that $M\models \phi^{*}({\bar a}_{k}, {\bar b}_{i})$.  Hence also $M\models \phi^{*}({\bar a}_{k}, {\bar b}_{j})$, and so again by indiscernibility, we have $M\models \phi^{*}({\bar a}_{i}, {\bar b}_{j})$.  Good.

\vspace{2mm}
\noindent
By a symmetric argument we have $M\models \phi^{*}({\bar a}_{i}, {\bar b}_{j})$ implies $M\models \phi^{*}({\bar a}_{j}, {\bar b}_{i})$. 

\end{proof}

\begin{Example} Let $f$ be a homomorphism from $(\Z,+)$ to the circle group $S_{1}$ by mapping $1$ to an ``irrational rotation". So $f(\Z)$ is dense in $S_{1}$. Then  the approxmate CL-theory of $((\Z,+), f, S_{1})$ is stable.
\end{Example}

In \cite{Tran-Walsberg} it is shown that expanding $(\Z,+)$ by the preimage of a small interval around the identity of $S_{1}$ under $f$ is dp-miniimal, hence by \cite{Conant} unstable, {\em as a first order structure}.  There is no contradiction as,  there will be no additional induced first order structure on $(\Z,+)$, in the sense of Definition 2.18.

Finally, it might be natural to ask whether, if $G$ is  a first order expansion of a group, $C$ is a compact group, $f$ is a 
homomorphism from $G$ to $C$ and $(G,f,C)$ is stable as a CL-structure, then this is explained by the existence of a stable first order expansion $G'$ of $G$ such that $C$ is a quotient of $G'/(G')^{00}$ (after passing to a saturated model). However then $C$ has to be profinite (as $(G')^{00} = (G')^{0}$).  Example 3.9 gives a counterexample.

\end{document}